\documentclass[hidelinks]{article}
\usepackage{graphicx} 
\usepackage[a4paper, margin=2.5cm]{geometry}
\usepackage{hyperref}
\usepackage{amsthm}
\usepackage{thm-restate}
\usepackage{geometry}
\usepackage{amssymb}
\usepackage{amsmath}
\usepackage{cleveref}
\usepackage{comment}
\usepackage{xcolor}
\usepackage[shortlabels]{enumitem}
\usepackage{listings}
\usepackage{blkarray}
\usepackage{lmodern}
\usepackage[english]{babel}
\usepackage{float}
\usepackage{tikz}

\newtheorem{theorem}{Theorem}[section]
\newtheorem{lemma}[theorem]{Lemma}
\newtheorem{corollary}[theorem]{Corollary}

\newtheorem{proposition}[theorem]{Proposition}
\newtheorem{conjecture}[theorem]{Conjecture}

\theoremstyle{definition}
\newtheorem{definition}[theorem]{Definition}


\newcommand{\cC}{\mathcal{C}}
\newcommand{\cH}{\mathcal{H}}

\newcommand{\cS}{\mathcal{S}}

\newcommand{\eps}{\varepsilon}

\newcommand{\vv}{\mathbf{v}}
\newcommand{\vu}{\mathbf{u}}
\newcommand{\vw}{\mathbf{w}}

\title{Universality for graphs with bounded density}

\author{
Noga Alon\thanks{Department of Mathematics, Princeton University, USA.
			Email: \texttt{nalon@math.princeton.edu}. This author was supported by the NSF under grant DMS-2154082.}
\and
Natalie Dodson\thanks{Middlebury College, USA.
                    Email: \texttt{ndodson@middlebury.edu}. This author was supported by the NSF under grant DMS-2150434.}
\and
Carmen Jackson\thanks{Northwestern University, USA.
                      Email: \texttt{carmenjackson2024@u.northwestern.edu}. This author was supported by the NSF under grant DMS-2150434.}
\and
Rose McCarty\thanks{Department of Mathematics, Princeton University, USA. Current affiliation: School of Mathematics and School of Computer Science, Georgia Institute of Technology, USA.
		Email: \texttt{rmccarty3@gatech.edu}. This author was supported by the NSF under grants DMS-2150434 and DMS-2202961.}
\and
Rajko Nenadov\thanks{School of Computer Science, University of Auckland, New Zealand. Email: \texttt{rajko.nenadov@auckland.ac.nz}}
\and
Lani Southern\thanks{Willamette University, USA.
Email: \texttt{lmsouthern@willamette.edu}. This author was supported by the NSF under grant DMS-2150434.}
}
\date{}

\begin{document}

\maketitle

\begin{abstract}
A graph $G$ is \emph{universal} for a (finite) family $\cH$ of graphs if every
$H \in \cH$ is a subgraph of $G$. For a given family $\cH$, the goal is to determine  the smallest number of edges an $\cH$-universal graph can have. With the aim
of unifying a number of recent results, we consider a family of graphs
with bounded density. In particular, we construct a graph with 
$$
    O_d\left( n^{2 - 1/(\lceil d \rceil + 1)} \right)
$$
edges which contains every $n$-vertex graph with density at most $d \in
\mathbb{Q}$ ($d \ge 1$), which is close to a lower bound $\Omega(n^{2 - 1/d -
o(1)})$ obtained by counting lifts of a carefully chosen (small) graph. When restricting the maximum degree of such graphs to be constant, we obtain a near-optimal universality. If we
further assume $d \in \mathbb{N}$, we get an asymptotically optimal construction.
\end{abstract}

\section{Introduction}

A graph $G$ is \emph{universal} for a (finite) family $\cH$ of graphs if every
$H \in \cH$ is a (not necessarily induced) subgraph of $G$. The complete graph with $n$ vertices is universal for the family of all graphs with $n$ vertices, and this is clearly the smallest universal graph for this family. However, if we restrict our attention to a family of graphs with some additional properties, more efficient (in terms of the number of edges) universal graphs might exist. This is a natural combinatorial question, with applications in VLSI circuit design \cite{bhatt84vlsi}, data storage~\cite{chung83storage}, and simulation of parallel computer architecture \cite{bhatt1986optimal}.

The problem of estimating the minimum possible number of edges in a universal graph for
various families has received a considerable amount of
attention. The previous work deals with
families of graphs with properties which naturally bound their density, such as graphs with bounded maximum degree~\cite{alon02sparse,alon08optimal,alon07sparsebounded,alon2000universality,alon2001near}, forests \cite{chung78tree,chung83spanning,chung78tree2,friedman87tree} and, more generally, graphs with bounded degeneracy~\cite{allen2023universality,nenadov2016ramsey}, as well as families of graphs with additional structural properties such as planar graphs \cite{babai82planar,esperet23planar} and graphs with small separators \cite{capalbo02planar,capalbo99small,chung90separator}, to name a few. Our focus is on the former case. Aiming to unify these results, we initiate the study of universality for a family of graphs with bounded density and no other assumptions. The density of a graph $H$ is defined as
$$
    m(H) = \max_{H' \subseteq H} \frac{e(H')}{v(H')}
$$
where $e(H')$ is the numbers of edges of $H'$ and $v(H')$ is the number of its vertices. 
In plain words, a graph $G$ has density $d \in \mathbb{Q}$ if not only the
number of edges of $G$ is at most $v(G) d$, but this also holds for every
subgraph of $G$. For $d \in \mathbb{Q}$ and $n \in \mathbb{N}$, we denote with
$\cH_d(n)$ the family of all graphs with $n$ vertices and density at most $d$.

As already hinted, the family of graphs with bounded density generalizes many interesting families.
For example, graphs with maximum degree $d$ have density at most
$d/2$. Forests have density arbitrarily close to $1$, and $d$-degenerate
graphs  and, more generally, graphs of arboricity $d$, have density
arbitrarily close to $d$ (a graph is $d$-degenerate if every subgraph has
minimum degree at most $d$). Note that every graph of density at most $d$ is also $\lfloor 2d \rfloor$-degenerate, thus bounded density implies bounded degeneracy. However, as we are aiming for an optimal dependence on the parameters, this implication does not suffice. A number of (almost-)optimal results have been obtained in some of these cases~\cite{allen2023universality,alon08optimal,chung83spanning}, and we conjecture that for all such families the bound on the size of a smallest universal graph is largely governed by the density. Therefore, generalizing all of these results, we believe the following is true.

\begin{conjecture} \label{conj:bounded_density}
    For every $d \in \mathbb{Q}$, $d > 1$, and $n \in \mathbb{N}$, there exists
    a graph with $G$ with
    $$
        e(G) \le C n^{2 - 1/d}
    $$
    edges which is $\cH_{d}(n)$-universal, where $C = C(d)$.
\end{conjecture}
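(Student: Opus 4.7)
The natural plan is to consider the random graph $\Gnp$ with $p=Cn^{-1/d}$ for a large constant $C=C(d)$, which has $\Theta(n^{2-1/d})$ edges in expectation, and to argue that it is $\cH_d(n)$-universal with positive probability. Fix $H \in \cH_d(n)$. The standard approach for bounded-degeneracy universality~\cite{allen2023universality,nenadov2016ramsey} would order the vertices of $H$ in a degeneracy order (each vertex has at most $\lfloor 2d \rfloor$ earlier neighbours) and embed greedily into $\Gnp$: when placing a vertex $v$ with $k$ already-embedded neighbours, pick an image from the common $\Gnp$-neighbourhood of their images, restricted to vertices not yet used, and use that in a typical sample this common neighbourhood has size $\Theta(np^k)=\Theta(n^{1-k/d})$. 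Plugging in $k=\lfloor 2d\rfloor$ only gives an $\cH_d(n)$-universal graph of size roughly $n^{2-1/\lfloor 2d \rfloor}$, which is weaker than the conjecture.

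To reach the exponent $2-1/d$ one must exploit density $d$ rather than degeneracy $\lfloor 2d\rfloor$. A more promising starting point is Hakimi's orientation theorem: every graph of density at most $d$ admits an orientation $\vec H$ with maximum out-degree $\lceil d \rceil$. Such orientations need not be acyclic, so they do not translate directly into an embedding order, but one can hope to embed $H$ in several rounds, each processing an acyclic layer of $\vec H$, so that the critical common-neighbourhood intersections are governed by $\lceil d \rceil$ rather than $\lfloor 2d \rfloor$. To further tighten the effective out-degree from $\lceil d \rceil$ down to the fractional value $d$ when $d\notin\mathbb{N}$, my plan would be to replace the single orientation by a convex combination of Hakimi-type orientations whose \emph{expected} out-degree at each vertex is at most $d$, and realise this fractional scheme as a multi-round embedding in which the ``heavier'' orientation is applied to only a $(d-\lfloor d\rfloor)$-fraction of the vertices, with an absorbing or local-lemma argument handling the few vertices where the heavier round cannot be avoided.

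The main obstacle, and the reason the statement is left as a conjecture, is to rigorously carry out this fractional scheme. Classical tools -- local-lemma-based extension arguments in the spirit of~\cite{nenadov2016ramsey} together with absorbing methods -- give an embedding whose success probability is driven by the \emph{worst} vertex, and hence by the largest out-degree actually realised, which is $\lceil d \rceil$. This is precisely what yields the weaker exponent $1-1/(\lceil d\rceil+1)$ proved in this paper. Reaching the conjectured exponent seems to require genuinely new ideas: an absorber tailored to the small set of vertices on which the fractional scheme is forced to use out-degree $\lceil d \rceil$, or a recursive decomposition of $H$ into dense and sparse parts embedded into separate pseudorandom regions of $G$. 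In either case, the technical difficulty is ensuring that the savings from replacing $\lceil d \rceil$ by $d$ -- a multiplicative factor of only $n^{(\lceil d \rceil - d)/d}$ per bottleneck vertex -- survive the concentration and union-bound losses inherent to any such argument.
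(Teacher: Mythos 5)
The statement you are addressing is Conjecture \ref{conj:bounded_density}, which the paper does not prove; it remains open, and the paper only establishes the weaker bounds of Theorems \ref{thm:integer_unbounded}--\ref{thm:integer}. Your text is likewise not a proof: it is a plan that, as you yourself say, stalls exactly at the point where the exponent $2-1/\lceil d\rceil$ (or $2-1/\lfloor 2d\rfloor$) would have to be improved to $2-1/d$, so the conjecture is not resolved by it. Beyond that, the concrete gap in your starting point is the choice of $G(n,p)$ with $p=Cn^{-1/d}$ as the universal graph: this fails already at the level of a single target graph, not just at the union bound over $\cH_d(n)$. If $H$ is an $n$-vertex disjoint union of copies of a balanced graph $F$ with $m(F)=d$ (e.g.\ many triangles when $d=1$), then w.h.p.\ $G(n,p)$ with $p=\Theta(n^{-1/d})$ does not contain $H$, because every vertex must lie in a copy of $F$ and the coupon-collector/Johansson--Kahn--Vu phenomenon forces an extra polylogarithmic factor in $p$; the paper points this out explicitly in the introduction. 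So any proof of the conjecture must use a non-random (or at least heavily structured) construction, and the degeneracy-order greedy embedding cannot be the engine, since degeneracy $\lfloor 2d\rfloor$ genuinely loses against density $d$.

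Your Hakimi-orientation and ``fractional orientation plus absorption'' ideas are not developed far enough to evaluate, but note that the paper's partial results go a different way entirely: Lemma \ref{lemma:matroid} (Edmonds' matroid partition) decomposes any $H$ with $m(H)\le d$ into $\lceil b\, m(H)\rceil$ spanning subgraphs whose components are unicyclic, and the universal graph is an explicit product construction on $[m]^d$ (with a small blowup and an extra vertex set $V^+$), into which the pieces are embedded coordinate-by-coordinate using the Beck--Fiala-type balancing of Corollary \ref{c22}, or, in the bounded-degree case, using expanders, random walks, and the Feldman--Friedman--Pippenger nonblocking-network machinery. If you want to attack the conjecture, the place where the paper loses the exponent is that all vertices of a component of $H_i$ receive the same $i$-th coordinate, forcing the final blowup; any scheme you propose must explain how to disambiguate vertices within components without a degree bound, which is precisely the difficulty your proposal leaves untouched.
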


If true, the bound in Conjecture \ref{conj:bounded_density} is the best
possible up to a constant $C$. Indeed, a simple counting argument shows that if $e(G) = o(n^{2 - 1/d})$ then the number of graphs with density at most $d$ which can possibly appear in $G$ is less than the total number of such graphs. Moreover, we obtain such a lower bound even when restricting $\cH_d(n)$ to graphs with bounded maximum degree. This is summarised in the following proposition.

\begin{proposition} \label{prop:lower_bound}
    For every $d \in \mathbb{Q}$ ($d \ge 1$) there exists $D \in \mathbb{N}$ and $\alpha > 0$ such that the following holds for every sufficiently large $n$. If $G$ is an $\cH_d^D(n)$-universal graph then
    $$
        e(G) \ge \alpha n^{2 - 1/d},
    $$
    where $\cH_d^D(n)$ is the family of all graphs $H \in \cH_d(n)$ with maximum degree at most $D$.
\end{proposition}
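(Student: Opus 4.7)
The strategy is a counting argument. The plan is to exhibit, for each rational $d \ge 1$, a constant $D = D(d)$ and a family $\mathcal{F} \subseteq \cH_d^D(n)$ of labeled $n$-vertex graphs with $|\mathcal{F}| \ge 2^{dn \log n - O(n)}$; then the fact that every labeled $H \in \mathcal{F}$ must embed into $G$, combined with an elementary bound on the number of labeled $dn$-edge graphs on $[n]$ embeddable into $G$, will yield $e(G) = \Omega(n^{2-1/d})$.

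To construct $\mathcal{F}$ I would use biregular bipartite graphs. Writing $d = p/q$ in lowest terms, I pick positive integers $r, s$ with $rs/(r+s) = d$ (equivalently $(qr-p)(qs-p) = p^2$), and set $D := \max(r, s)$. For $k \in \mathbb{N}$ with $n = (r+s)k$, let $\mathcal{F}$ be the set of all labeled $(r, s)$-biregular bipartite graphs on $[sk] \sqcup [rk]$. Each $H \in \mathcal{F}$ has $rsk = dn$ edges and maximum degree $D$; moreover, any subgraph on $s'$ left and $t'$ right vertices has at most $\min(rs', st')$ edges, so its density is at most $\min(rs', st')/(s'+t') \le rs/(r+s) = d$. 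Hence $\mathcal{F} \subseteq \cH_d^D(n)$, and a standard configuration-model enumeration gives $|\mathcal{F}| \ge 2^{dn \log n - O(n)}$.

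For the counting step I may assume without loss of generality that $|V(G)| \le 2n$ (isolated vertices are irrelevant to universality). Since $G$ is $\cH_d^D(n)$-universal, every labeled $H \in \mathcal{F}$ admits an injection $\phi\colon [n] \hookrightarrow V(G)$ with $\phi(E(H)) \subseteq E(G)$. For each fixed $\phi$, the number of labeled graphs on $[n]$ with $dn$ edges that become subgraphs of $G$ under $\phi$ is at most $\binom{e(G)}{dn}$; summing over the at most $(2n)^n$ injections yields
\[
  |\mathcal{F}| \;\le\; (2n)^n \binom{e(G)}{dn}.
\]
Taking logarithms and combining with the lower bound on $|\mathcal{F}|$ rearranges to $\log(e(G)/n) \ge (1 - 1/d)\log n - O(1)$, giving $e(G) \ge \alpha \, n^{2 - 1/d}$ for some $\alpha = \alpha(d) > 0$.

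The main obstacle is handling the rationals $d$ for which no integer pair $(r, s)$ with $rs/(r+s) = d$ exists (for instance $d = 7/3$, where $(3r - 7)(3s - 7) = 49$ has no positive integer solution). For those $d$ I would replace the biregular construction by bipartite graphs whose left-degree takes one of two consecutive integer values, mixed so that the total edge count equals $dn$, with the right-degrees capped to rule out the finitely many $K_{a, b}$-subgraphs whose density would exceed $d$. Verifying that this constrained class still has cardinality $2^{dn \log n - O(n)}$---essentially a permanent-type lower bound with degree restrictions---is the remaining technical step.
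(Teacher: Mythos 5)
Your counting framework is sound and is essentially the same as the paper's: exhibit roughly $2^{dn\log n-O(n)}$ labeled graphs in $\cH_d^D(n)$ with $dn$ edges each, and compare with the number of labeled $dn$-edge graphs that can embed into $G$. The genuine gap is in the construction of the family. The biregular bipartite family only exists when $rs/(r+s)=d$ has a solution in positive integers, and, as you note yourself, for general rational $d$ (e.g.\ $d=7/3$) it does not; your proposed repair is exactly the hard part and, as sketched, it does not obviously work. The dangerous subgraphs in a bipartite graph with left degrees in $\{a,a+1\}$ and right degrees at most $s$ are not the finitely many $K_{a,b}$'s: any large sub-bigraph in which the degree-$(a+1)$ left vertices and the high-degree right vertices concentrate has density close to $(a+1)s/(a+1+s)$, which can exceed $d$. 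Concretely, for $d=7/3$ with left degrees in $\{2,3\}$, keeping every subgraph at density $\le 7/3$ forces the right-degree cap $s\le 10$ (since $3s/(3+s)\le 7/3$ iff $s\le 10{.}5$), but then the right side cannot absorb $7n/3$ edges: global density $7/3$ with left degrees $\le 3$ forces $|R|\le 2n/9$ and hence average right degree $\ge 10{.}5$. So one must carefully balance how the two left-degree classes meet the right vertices, which is a nontrivial design problem. The paper avoids it entirely by invoking the theorem of Ruci\'nski and Vince that for every rational $d\ge 1$ there is a \emph{balanced} graph $F$ with $m(F)=e(F)/v(F)=d$, and then taking lifts of $F$ (a perfect matching between $V_i$ and $V_j$ for each edge $ij$ of $F$): lifts automatically satisfy $m(H)=d$, have maximum degree $v(F)$, and number at least $\left((n/v(F))!\right)^{e(F)}=2^{dn\log n-O(n)}$. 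If you do not want to reprove a balanced-graph existence result, you should cite it as the paper does; otherwise your ``remaining technical step'' is the actual content of the proposition for general rational $d$.

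A second, smaller issue: the reduction ``WLOG $|V(G)|\le 2n$'' is not justified. Deleting isolated vertices only bounds $|V(G)|$ by $2e(G)$, which may be much larger than $n$, and then summing over all injections $[n]\hookrightarrow V(G)$ costs $2^{(2-o(1))n\log n}$ rather than $2^{n\log n+O(n)}$, which ruins the exponent. The standard fix (used in the paper) is to count differently: map each embeddable $H$ to the image of its edge set, a set of $dn$ edges of $G$, and observe that a fixed such set covers at most $n$ vertices, so it corresponds to at most $n!$ labeled graphs on $[n]$. This gives
$$
|\mathcal{F}|\;\le\;\binom{e(G)}{dn}\,n!,
$$
independently of $|V(G)|$, after which your final computation goes through verbatim.
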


A careful reader will notice that we require $d > 1$ in Conjecture \ref{conj:bounded_density}. Indeed, if a graph $H$ has
density at most $1$ then each connected component of $H$ contains at most one cycle, thus $H$ is almost a forest. For the family of all forests it is known that $\Theta(n \log n)$
edges are both necessary and sufficient \cite{chung78tree,chung83spanning}. This seems to be an artifact of the fact that having only $\Theta(n)$ edges is simply too restrictive with how we can arrange them, which we believe is not the case when we have $O(n^{1 + \eps})$ edges for any constant $\eps > 0$. While this justification is vague, one can draw analogy with the theory of random graphs, where the multiplicative $\log(n)$ factor becomes unnecessary when moving from Hamilton cycles to, say, powers of Hamilton cycles \cite{kahn21hamilton} (signifying the difference between unicyclic graphs and those of density $d > 1$).

A reader familiar with random graph theory \cite{bollobas81threshold} will notice that a random graph with
$n$ vertices and $\omega(n^{2 - 1/d})$ edges is likely to contain a given graph
$H$ with $m(H) \le d$, provided $v(H)$ is significantly smaller than $n$.
However, it is known that if $v(H)$ is large then, in some cases such as when $H$ is a collection of many triangles, a significantly denser random
graph is needed in order for $H$ to appear. It is interesting that from
the point of view of constructing universal graphs, this phenomena does not
happen.

A recent result of Allen, B\"{o}ttcher, and Liebenau \cite{allen2023universality} establishes the bound in Conjecture \ref{conj:bounded_density} (up to a $\log^{2/d}(n)$ factor) in the special case where we restrict attention to the graphs in $\cH_d(n)$ which are also $d$-degenerate (for $d$ an integer). Moreover, using the fact that a graph of density $d$ is $\lfloor 2d \rfloor$-degenerate, their result also implies an upper bound of order
$$
    O_d\left( n^{2 - 1 / \lfloor 2d \rfloor} \log^{1/(2d) + o(1)}(n) \right).
$$
for $\cH_d(n)$-universality. Our first main result, Theorem \ref{thm:integer_unbounded}, significantly improves this starting from $d \ge 1.5$.

\begin{theorem} \label{thm:integer_unbounded}
    For every $n \in \mathbb{N}$ and $d \in \mathbb{Q}$, $d > 1$, there exists a graph $G$ with
    $$
        e(G) \le C n^{2 - 1/(\lceil d \rceil +1)}
    $$
    edges which is $\mathcal{H}_d(n)$-universal, where $C = C(d)$.
\end{theorem}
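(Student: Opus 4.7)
The plan is to orient $H$ via Hakimi's theorem, partition $V(H)$ into a "high-degree" hub part and a "low-degree" remainder, and build $G$ as the union of a complete-bipartite hub together with a sparse pseudorandom graph carrying the low-degree part; the low-degree subgraph is then embedded greedily, using the orientation to keep the number of per-step constraints at most $k$.

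Let $k = \lceil d \rceil$. Since $m(H) \le d \le k$, Hakimi's theorem gives an orientation $\vec H$ of $H$ in which every vertex has out-degree at most $k$. Pick a threshold $\Delta = \Theta(n^{1/(k+1)})$ and let $B \subseteq V(H)$ be the vertices with $\deg_H(v) \ge \Delta$. From $e(H) \le dn$ one gets $|B| = O(n^{1 - 1/(k+1)})$, and $A := V(H) \setminus B$ consists of vertices of degree $<\Delta$, still oriented with out-degree $\le k$. On the $G$ side, I would take $V(G) = U \sqcup W$ with $|U| = \Theta(n^{1 - 1/(k+1)})$ and $|W| = n$, put every edge incident to $U$ into $G$ (a complete hub), and place a pseudorandom graph on $W$ of edge density $p = \Theta(n^{-1/(k+1)})$ — either a random $G(|W|,p)$ for an existence proof, or an explicit pseudorandom graph if a constructive statement is desired. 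Both tiers contribute $O(n^{2-1/(k+1)})$ edges.

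For the embedding, map $B$ into $U$ by any injection, so that every $H$-edge meeting $B$ is realised automatically by the hub; it remains to embed $A$ into $W$ and realise $H[A]$ inside $G[W]$. I would order $A$ and embed greedily so that at step $i$ the image $\phi(v_i)$ is chosen adjacent in $G[W]$ to the already-embedded out-neighbours of $v_i$ in $\vec H$. Since there are at most $k$ such out-neighbours, the common neighbourhood in $G[W]$ has expected size $|W| p^k = \Theta(n^{1/(k+1)})$, which dwarfs the number of previously used vertices, so there is substantial slack at each greedy step.

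The main difficulty is making "only the out-edges of $v_i$ constrain the greedy step" hold literally, so that one really embeds $H[A]$ rather than some subgraph of it. A bare degeneracy ordering of $H[A]$ would have back-degree up to $\lfloor 2d \rfloor$ (not $k$), which is too lossy to yield the $1/(k+1)$ exponent; the orientation $\vec H$ must therefore be used in a subtler way than as a mere vertex ordering. I would attempt this via one of: a colouring of $V(H)$ into a constant number of classes such that, inside each class, the enforced back-edges coincide with the $\le k$ out-edges; a two-pass strategy where "backward" edges of $\vec H$ are routed through a small reservoir carved out of $U$; or a coupling with a random greedy that exploits the density bound through a potential/entropy argument. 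Combining such a scheme with a union bound strong enough to cover all exponentially many $H \in \cH_d(n)$ simultaneously, organised through the structural decomposition $(B, A, \vec H)$, is where I expect the bulk of the technical work to lie.
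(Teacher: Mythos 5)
There is a genuine gap, and it sits exactly where you place it yourself: the step ``only the $\le k$ out-neighbours constrain the greedy choice'' is not something an out-degree-$\le k$ orientation can deliver. When you embed $H[A]$ vertex by vertex, the constraints at $v_i$ are \emph{all} already-embedded neighbours of $v_i$, i.e.\ the back-degree of the ordering, and the best back-degree achievable over all orderings is the degeneracy of $H[A]$, which for graphs of density $d$ can genuinely be as large as $\lfloor 2d\rfloor > \lceil d\rceil$. So no ordering realises your constraint count, and the three patches you sketch (constant colouring classes, a reservoir for backward edges, an entropy-coupled random greedy) are precisely the missing argument, not refinements of an existing one; routing all backward edges of $\vec H$ through a reservoir in $U$, for instance, would have to handle up to $\Theta(n)$ such edges ending at up to $\Theta(n)$ distinct vertices, which a hub of size $O(n^{1-1/(k+1)})$ cannot absorb. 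A second, independent error is quantitative: with $|W|=\Theta(n)$ and $p=\Theta(n^{-1/(k+1)})$, the common neighbourhood of $k$ embedded neighbours has size $\Theta(n^{1/(k+1)})$, while the number of already-used vertices is $\Theta(n)$ because the embedding of $A$ is almost spanning --- so the used vertices dwarf the candidate set, not the other way around, and the greedy step has no slack without a much more careful bookkeeping of where used vertices sit inside codegree sets. Finally, even granting a greedy that succeeds a.a.s.\ for one $H$, universality requires all $2^{\Theta(n\log n)}$ graphs in $\mathcal{H}_d(n)$ simultaneously; a union bound needs per-graph failure probability $e^{-\omega(n\log n)}$, which a step-by-step greedy analysis does not provide, and the paper explicitly notes that random hosts of this density can fail to contain even specific near-spanning bounded-density graphs (e.g.\ unions of many triangles), so the random graph on $W$ cannot simply be assumed to work.

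For contrast, the paper avoids orientations, randomness and greedy embedding altogether. It decomposes $H$ (via Edmonds' matroid partition theorem) into $\lceil d\rceil$ subgraphs whose components are unicyclic, deletes $O(n/m)$ vertices so that every component has size at most $m\approx n^{1/(\lceil d\rceil+1)}$ (these deleted vertices play the role of your hub $V^+$), and then assigns whole components to coordinates of the product graph on $[m]^{\lceil d\rceil}$ by an iterated Beck--Fiala discrepancy argument, so that every fibre receives $O(m)$ vertices of $H$; a constant-factor blowup then makes the assignment injective. The host graph is explicit and the embedding is deterministic, which is what lets the argument cover every $H\in\mathcal{H}_d(n)$ with no union bound at all. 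If you want to salvage your approach, the component-level bookkeeping (rather than vertex-level greedy choices) is the idea to import.
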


As a further support towards Conjecture \ref{conj:bounded_density}, we consider the family $\cH_d^D(n)$ of graph $H \in \cH_d(n)$ with maximum degree $D$. In this case, we get a nearly-optimal bound. 

\begin{theorem} \label{thm:rational}
    For every $D, n \in \mathbb{N}$ and $d \in \mathbb{Q}$, $d > 1$, there exists a graph $G$ with
    $$
        e(G) \le n^{2 - 1/d} \cdot 2^{C \sqrt{\log n}}
    $$
    edges which is $\mathcal{H}_q^D(n)$-universal, where $C = C(D, d)$.
\end{theorem}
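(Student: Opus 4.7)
The plan is to construct $G$ as a random multipartite graph with $k = 2^{\Theta(\sqrt{\log n})}$ equal-sized blocks, and to embed any target $H \in \cH_d^D(n)$ block by block via Hall's theorem, using the density bound to control the \emph{average} back-degree and the maximum degree bound to limit overlaps between candidate sets. The factor $2^{O(\sqrt{\log n})}$ in the edge count equals $k^{1/d}$, and is simultaneously the slack that absorbs a union bound over partial embeddings. Morally, this mirrors the Alon--Asodi \cite{alon02sparse} construction for bounded-degree universality, but replaces the max-degree exponent by a density-based exponent.

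\textbf{Construction.} Partition $V(G) = B_1 \sqcup \cdots \sqcup B_k$ with each $|B_i| = n/k$, and for each pair $\{i,j\}$ place an independent random bipartite graph $G(|B_i|,|B_j|,p)$ with edge probability $p = C (n/k)^{-1/d}$, where $C = C(d,D)$ is chosen large enough later. Then w.h.p.
$$
e(G) \;\asymp\; \binom{k}{2}(n/k)^2 p \;\asymp\; n^{2-1/d}\cdot k^{1/d} \;=\; n^{2-1/d}\cdot 2^{O(\sqrt{\log n})}.
$$

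\textbf{Embedding.} Given $H \in \cH_d^D(n)$, fix a degeneracy-type ordering $v_1,\ldots,v_n$ of $V(H)$, which exists since $m(H) \le d$ and yields prefix back-edge count at most $d j$. Split this ordering into consecutive blocks $W_1,\ldots,W_k$ of size $n/k$, and embed $W_i$ into $B_i$ sequentially. Given a partial embedding $\phi$ of $W_{<i}$, each $v \in W_i$ has a candidate set $C(v) \subseteq B_i$, namely the common neighbourhood in $G$ of $\phi(N^-(v))$, of expected size $\sim (n/k)\cdot p^{|N^-(v)|}$. A Hall's-theorem argument on the bipartite graph between $W_i$ and $B_i$ defined by these candidate sets produces an injective extension, provided $G$ is suitably typical (which holds w.h.p.\ by standard concentration bounds for random bipartite graphs, with overlap bounds relying on $\Delta(H) \le D$).

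\textbf{Main obstacle.} The crux is controlling those few vertices $v \in W_i$ whose individual back-degree $|N^-(v)|$ substantially exceeds $d$ --- since only the \emph{average} back-degree is bounded by $d$, some $v$ can have $|N^-(v)|$ as large as $D$, and then $|C(v)|$ contracts like $(n/k)p^{|N^-(v)|}$ to an unmanageable size. Two complementary remedies are natural: (a) refine the partition $W_1,\ldots,W_k$ so that each block contains only a controlled number of such ``heavy'' vertices (exploiting the density and max-degree bounds jointly, via a balanced-colouring or median-of-means partition), and (b) overlay an auxiliary, denser random graph on a small reservoir of vertices of $G$, dedicated to absorbing the heavy vertices while contributing only $o(n^{2-1/d})$ edges. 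Verifying the Hall condition uniformly via a union bound over all partial embeddings and all $H \in \cH_d^D(n)$ is where the $2^{O(\sqrt{\log n})}$ slack is spent, and choosing $k$ precisely to balance $k^{1/d}$ against this union bound is what forces $k = 2^{\Theta(\sqrt{\log n})}$.
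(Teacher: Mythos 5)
There is a genuine gap, and it is fatal to the construction itself, not just to the embedding argument. A host of near-uniform edge density $p\approx (n/k)^{-1/d}$ with $n^{2-1/d}\cdot 2^{O(\sqrt{\log n})}$ edges simply does not contain some of the graphs it must contain. Take $d=3/2$, $D=3$, and let $H$ be a perfect $K_4$-factor, so $H\in\mathcal{H}_{3/2}^{3}(n)$. In your random multipartite $G$ with $p=C(n/k)^{-2/3}$ and $k=2^{\Theta(\sqrt{\log n})}$, the expected number of copies of $K_4$ is at most $n^4p^6=C^6k^4=2^{O(\sqrt{\log n})}$, so by Markov, with high probability $G$ contains only $n^{o(1)}$ copies of $K_4$ in total, while $H$ needs $n/4$ vertex-disjoint copies; hence $G$ is not universal no matter how the Hall-type embedding is organized. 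This is the global face of the ``heavy vertex'' obstacle you flag: for any balanced $F$ with $m(F)=d$, balancedness forces $\delta(F)\ge d$ (deleting a vertex of degree $\delta$ leaves $e(F)-\delta$ edges on $v(F)-1$ vertices), so in every ordering the last vertex of each copy needs a common neighbour of at least $\lceil d\rceil$ already-embedded images, and for $d\notin\mathbb{N}$ such codegrees are $o(1)$ in a random-like host at density $n^{-1/d+o(1)}$. Neither remedy repairs this: (a) repartitioning the blocks does not change the per-vertex collapse of $C(v)$; and (b) there are $\Theta(n)$ heavy vertices (one per copy of $F$), so a reservoir must host $\Omega(n)$ of them injectively, forcing either density toward the main part of order at least $N^{-1/\lceil d\rceil}$ on $N=\Omega(n)$ reservoir vertices (at least $\Omega(n^{2-1/\lceil d\rceil})$ edges, polynomially over budget when $d\notin\mathbb{N}$), or moving whole copies of $F$ into the reservoir, which just recreates the original problem there. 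Finally, the plan to spend the $2^{O(\sqrt{\log n})}$ slack on a union bound ``over all $H\in\mathcal{H}_d^D(n)$'' is not affordable either: by the counting in Section \ref{sec:lower_bound} this family has $n^{\Theta(n)}$ members, far beyond what the available failure probabilities can beat; one must instead extract deterministic properties of the host valid for all $H$ simultaneously.

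The moral, which the paper's proof exploits, is that at this edge count the universal graph must be very far from quasirandom of uniform density. The paper writes $d=a/b$, builds the host on $[m]^a$ with $m=n^{1/a}$ from the square of a $2^{\sqrt{\log n}}$-regular expander, declaring $\vv\sim\vw$ when at least $b$ of the $a$ coordinates are adjacent; this gives huge codegrees along coordinate classes while keeping $n^{2-1/d}2^{O(\sqrt{\log n})}$ edges. On the embedding side, Lemma \ref{lemma:matroid} (applied to $H$ with every edge duplicated $b$ times) decomposes $H$ into $a$ graphs with unicyclic components, each converted to a bounded-degree tree by Lemma \ref{lemma:tree_square} and mapped into the expander by a random-walk homomorphism; collisions are controlled via Lemma \ref{lemma:random_tree} together with Hajnal--Szemer\'edi, and the residual ambiguity of size $2^{O(\sqrt{\log n})}$ is absorbed by a final blowup. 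Because each edge of $H$ is certified coordinatewise by $b$ of these homomorphisms, no step of the embedding ever demands an $\lceil d\rceil$-wise common neighbourhood inside a sparse random-like graph --- which is exactly the step your scheme cannot supply. Any repair of the random-host approach would have to build in this kind of coordinate/product structure, at which point you have essentially reconstructed the paper's argument.
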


Finally, in the case where $d$ is an integer, we obtain an optimal bound.

\begin{theorem} \label{thm:integer}
    For every $D, n \in \mathbb{N}$ and $d \in \mathbb{N}$, there exists a
    graph $G$ with 
    $$
        e(G) \le C n^{2 - 1/d}
    $$
    edges which is $\mathcal{H}_d^D(n)$-universal, where $C = C(D, d)$.
\end{theorem}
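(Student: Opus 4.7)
The plan is to construct $G$ as the random graph $G(n, p)$ with $p = C n^{-1/d}$ for a sufficiently large constant $C = C(D, d)$, and show that $G$ is $\cH_d^D(n)$-universal with positive probability. The expected number of edges is $\binom{n}{2}p = O(n^{2-1/d})$, concentrated by Chernoff. The integer value of $d$ enters through Hakimi's orientation theorem: every $H \in \cH_d^D(n)$ admits an orientation $\vec{H}$ with maximum out-degree at most $d$ (not $\lceil d \rceil + 1$, which appears in the weaker bound of Theorem \ref{thm:integer_unbounded}); the maximum in-degree in $\vec{H}$ is bounded by $D$.

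The core is a staged embedding. First, a Chernoff-plus-union-bound argument shows that with high probability $G$ is quasi-random, in the sense that for every $S \subseteq V(G)$ with $|S| \le d$ and every $W \subseteq V(G)$ with $|W| \le n/2$,
$$
    |N_G(S) \setminus W| \ge \tfrac{1}{2} n p^{|S|}.
$$
Next, partition $V(H)$ into $T = O_{D,d}(1)$ layers $L_1, \ldots, L_T$ such that each $L_i$ is independent in $H$ and, when $L_i$ is processed, the already-embedded neighbors of each $v \in L_i$ are exactly its out-neighbors in $\vec{H}$, of which there are at most $d$. Embed layer by layer: on layer $L_i$, form a bipartite candidate graph between $L_i$ and the unused host vertices with edge $\{v, w\}$ whenever $w$ lies in the common $G$-neighborhood of the images of $v$'s out-neighbors. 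The quasi-random property yields $|C(v)| \ge \Omega(C^d)$, and bounded maximum degree of $H$ ensures each host vertex appears in at most $O_D(1)$ candidate sets. Hall's theorem then produces a valid matching, and iterating over all layers embeds $H$. For the universality statement, a union bound over $|\cH_d^D(n)| \le 2^{O_D(n \log n)}$ graphs is afforded by amplifying the per-layer Hall guarantee into an exponential lower bound on the number of valid matchings, giving per-$H$ failure probability $2^{-\Omega_D(n \log n)}$.

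The main obstacle is the layer decomposition: $m(H) \le d$ with $d$ integer only guarantees $H$ is $2d$-degenerate, and Hakimi's orientation of $H$ may contain oriented cycles, so no topological order of $\vec{H}$ is available and ``peel sources'' fails inside oriented cycles. A naive degeneracy ordering gives back-degree $\le 2d$ at each step, yielding only the weaker bound $n^{2-1/(2d)}$; getting $n^{2-1/d}$ requires that the \emph{back-constraint count} is controlled by the orientation out-degree $d$, not by the total back-degree. The resolution I envision is a two-phase embedding: first, the cyclic orientation $\vec{H}$ is partitioned into a bounded number of acyclic pieces (exploiting the bounded max in-degree $D$), and each piece is handled via topologically sorted layers contributing only $d$ constraints per vertex; second, a matching pass absorbs the leftover back-in-neighbor constraints using the slack in the quasi-random common neighborhoods. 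Making this decomposition fit within $T = O_{D,d}(1)$ layers, and verifying Hall's condition robustly enough to pay for the universality union bound, is where the bulk of the technical work resides.
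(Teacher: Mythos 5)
Your proposal has a fatal flaw at its very first step: the host graph. A random graph $G(n,p)$ with $p = C n^{-1/d}$ for a \emph{constant} $C$ is with high probability \emph{not} $\cH_d^D(n)$-universal; in fact it fails to contain even a single specific member of the family. Take $H$ to be a disjoint union of $\lfloor n/(2d+1)\rfloor$ copies of $K_{2d+1}$ (density exactly $d$, maximum degree $2d \le D$). The expected number of copies of $K_{2d+1}$ in $G(n,Cn^{-1/d})$ is of order $n^{2d+1}p^{\binom{2d+1}{2}} = C^{d(2d+1)}$, a constant, so whp $G$ contains $O(1)$ copies of $K_{2d+1}$ and certainly not $\Theta(n)$ disjoint ones. (For $d=1$ the same happens with a triangle factor.) This is exactly the phenomenon the paper flags in its introduction: for spanning or near-spanning $H$ of density $d$, random graphs need an extra polylogarithmic factor (Johansson--Kahn--Vu type thresholds), so no choice of constant $C$ can work, and the theorem's bound $Cn^{2-1/d}$ is unreachable by a plain random host. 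The same local obstruction kills your quasi-randomness claim: for $|S| = d$ one has $np^{|S|} = C^d = O(1)$, so it is false whp that \emph{every} $d$-set has common neighborhood of size $\ge \tfrac12 np^d$ avoiding any half of the vertices --- most $d$-sets have empty common neighborhood. Consequently the Hall-type layer embedding cannot even get started, and the proposed union bound over $2^{\Theta(n\log n)}$ graphs $H$ (which would require failure probabilities of order $2^{-\omega(n\log n)}$ from constant-size candidate sets) is also hopeless. The difficulty you do identify (cyclic orientations and layer decomposition) is real but secondary to this.

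For contrast, the paper avoids randomness in the host entirely: it takes an explicit $(m,t,3\sqrt t)$-expander $G$ on $[m]$ with $m = Cn^{1/d}$, forms the product graph on $[m]^d$ (edges when some coordinate pair is an edge of $G$) plus a small set $V^+$ of $2dn/m$ universal vertices, giving $O(n^{2-1/d})$ edges. Universality is proved by decomposing $H$ via Edmonds' matroid partition theorem into $d$ subgraphs whose components are unicyclic, cutting them (using the removed vertices mapped to $V^+$) into components of size at most $m$, and then embedding each forest into the expander one tree at a time with the Feldman--Friedman--Pippenger nonblocking-network lemma, while maintaining the fiber-size invariant $|S_{\mathbf{v}}| \le n^{(d-i)/d}$ so that the product map is injective. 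If you want to salvage a probabilistic route, the randomness must go into the embedding (as the paper does in Theorem \ref{thm:rational}), not into the host graph.
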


Let us briefly compare these results with the previous ones. Alon and Capalbo \cite{alon08optimal} constructed a graph with $O_{D}(n^{2 - 2/D})$ edges which is universal for the family of all $n$-vertex graphs with maximum degree at most $D$. Theorem \ref{thm:integer} implies this result, and further generalizes it, in the case $D$ is even. In the case $D$ is odd, Theorem \ref{thm:rational} provides a bound which is by a factor of $2^{O(\sqrt{\log n})}$ weaker than the one from \cite{alon08optimal}. However, in comparison with the proof from \cite{alon08optimal} which relies on the fact that bounded degree graphs can be decomposed into path-like pieces, we use a much more general decomposition (Lemma \ref{lemma:matroid}) which applies to all graphs with bounded density. Moreover, Theorem \ref{thm:integer} improves a result of the fifth author \cite{nenadov2016ramsey} on universality of $d$-degenerate graphs with bounded degree, to an optimal one. That being said, improving Theorem \ref{thm:rational} is a natural first step towards Conjecture \ref{conj:bounded_density}.

\begin{conjecture} \label{conj:bounded_deg_density}
    For every $D, n \in \mathbb{N}$ and $d \in \mathbb{Q}$, $d \ge 1$, there exists a graph $G$ with
    $$
        e(G) \le C n^{2 - 1/d}
    $$
    edges which is $\mathcal{H}_d^D(n)$-universal, where $C = C(D, d)$.
\end{conjecture}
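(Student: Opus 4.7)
The goal is to remove the $2^{C\sqrt{\log n}}$ factor separating Theorem \ref{thm:rational} from Conjecture \ref{conj:bounded_deg_density}. Writing $d = p/q$ in lowest terms, the key structural input is that any $H \in \cH_d^D(n)$ satisfies $q \cdot e(H') \le p \cdot v(H')$ for every subgraph $H'$. Through the matroid-partition framework underlying Lemma \ref{lemma:matroid}, this should yield a vertex ordering and an edge colouring of $H$ with strong block-averaged back-degree control, where each ``block'' of $q$ vertices contributes exactly $p$ back-edges in total (rather than the $q\lceil d \rceil$ one gets from a straight degeneracy ordering).

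My plan is to use this refined structure to upgrade the construction behind Theorem \ref{thm:integer} from integer $d$ to rational $d$. That construction provides an explicit host with tightly controlled common-neighbourhood sizes, matching the density threshold $n^{2-1/d}$ when $d$ is an integer. I would partition the host vertex set into $q$ colour classes and place, between each pair, a suitably tuned bipartite analogue of the integer-$d$ construction, so that any $q$-vertex pattern spanning $p$ back-edges has a polynomially large common neighbourhood. The embedding proceeds greedily along the block ordering, assigning each block round-robin to the $q$ classes so that its back-edges align with the prescribed bipartite structure; the degree bound $D$ caps local collisions among previously embedded vertices.

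The main obstacle, and the source of the $2^{O(\sqrt{\log n})}$ slack in Theorem \ref{thm:rational}, is realising the required $q$-partite host without accumulating subpolynomial loss from a multi-scale product construction. My best guess for circumventing this is to replace the iterated product by a single algebraic construction in the spirit of the norm-graph or generalised-quadrangle constructions used by Alon and Capalbo~\cite{alon08optimal} for the $D$-regular case, adapted to yield the exact $q$-partite common-neighbourhood guarantees needed here. A plausible alternative is to preprocess a random host via dependent random choice so that \emph{every} small vertex set has a common neighbourhood of the correct size, thereby reducing the relevant union bound to a polynomial one; the exponent $p/q$ rather than $\lceil p/q \rceil$ is precisely what makes the standard dependent-random-choice parameters just barely insufficient, and overcoming this appears to require a two-stage sparsification in which the first stage deletes edges incident to atypically heavy $q$-tuples. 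Establishing that such a host supports the greedy embedding for \emph{every} $H \in \cH_d^D(n)$ simultaneously—coupling the algebraic or pseudorandom structure of $G$ with the matroid structure of $H$ at the tight threshold—is the step I expect to be genuinely hard.
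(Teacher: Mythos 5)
There is nothing in the paper to compare your argument against: Conjecture \ref{conj:bounded_deg_density} is stated as an open problem, and the paper proves it only in the integer case (Theorem \ref{thm:integer}) and up to a $2^{O(\sqrt{\log n})}$ factor in the rational case (Theorem \ref{thm:rational}). More importantly, what you have written is a research plan rather than a proof, and its two load-bearing steps are unsubstantiated. First, the ``refined structure'' you want to extract from $m(H)\le p/q$ --- a vertex ordering in which every block of $q$ vertices contributes exactly $p$ back-edges --- does not follow from bounded density or from the matroid framework of Lemma \ref{lemma:matroid}. That lemma partitions the edge multiset of the $b$-fold duplication of $H$ into $\lceil b\,m(H)\rceil$ graphs with unicyclic components; this is a global averaging statement, and local fluctuations (e.g.\ a single vertex of degree $D$, or a dense $p/q$-tight subgraph sitting inside a sparse remainder) make exact per-block back-degree control impossible in general. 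You would at best get such counts on average, which reintroduces exactly the ambiguity the paper's proofs fight against.

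Second, your embedding scheme needs a host with $O_{D,d}(n^{2-1/d})$ edges in which \emph{every} relevant $q$-vertex pattern has a polynomially large common neighbourhood, and none of the tools you invoke delivers this. Norm graphs and generalized quadrangles give $K_{s,t}$-freeness, i.e.\ \emph{upper} bounds on common neighbourhoods, not uniform lower bounds; dependent random choice produces a subset in which typical tuples behave well, not all tuples, and the union bound over all $H$ and all partial placements is precisely the obstruction that forces the $2^{O(\sqrt{\log n})}$ loss in the proof of Theorem \ref{thm:rational} (this is also why a random host fails outright at density $n^{2-1/d}$, as discussed in the introduction in connection with vertex-disjoint triangles). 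Note also that the constructions of Alon and Capalbo \cite{alon08optimal} do not embed greedily via common-neighbourhood guarantees; they rely on decomposing bounded-degree graphs into path-like pieces and routing them through expanders, which is why the paper's proof of Theorem \ref{thm:integer} goes through nonblocking networks rather than any greedy argument. Since you yourself flag the coupling of the host's structure with the matroid decomposition of $H$ as the step you expect to be genuinely hard, the proposal leaves the conjecture where the paper leaves it: open.
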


Unlike in the case of graphs with arbitrarily large degree, an additional multiplicative $\log(n)$ factor is not needed even in the case of forests \cite{friedman87tree}. Thus we can relax the condition to $d \ge 1$.

Finally, let us briefly note that our constructions of universal graphs are based on a \emph{product construction} first used by Alon and Capalbo \cite{alon07sparsebounded,alon08optimal}, refining an earlier approach by Alon, Capalbo, Kohayakawa, R\"odl, Ruci\'nski, and Szemer\'edi \cite{alon2001near}. 
The proofs here also apply some of the ideas of Beck and Fiala~\cite{beckfiala} from Discrepancy Theory, results of  Feldman, Friedman and Pippenger~\cite{feldman88nonblocking} (see also
\cite{draganic22rolling}) from the theory of nonblocking networks, and random walks on expanders, together with the Matroid Decomposition Theorem of Edmonds
\cite{edmonds65matroid}.

The paper is organised as follows. In the next section we prove that the bound in Conjecture \ref{conj:bounded_density}, if true, is optimal even if we only restrict attention to $\cH_d^D(n) \subseteq \cH_d(n)$, where the bound $D$ on the maximum degree depends on $d$. Section \ref{sec:preliminaries} collects some results used in two or all three proofs. We then proceed with proofs of our main theorems. Comments on differences between proofs are given when appropriate. Throughout the paper we assume, whenever this is needed, that the parameter $n$ is sufficiently large as a function of any other parameter. To simplify the presentation we omit all floor and ceiling signs whenever they are not crucial.

\section{Lower bound} \label{sec:lower_bound}

Consider a (fixed) graph $F$ such that $m(F) = e(F) / v(F)$. Such graphs are
called \emph{balanced}. Let $n \in \mathbb{N}$ be sufficiently large and
divisible by $v(F)$. We obtain a large family of $n$-vertex graphs
$H$ with $m(H) = m(F)$ as follows. Set $V(H) = V_1 \cup \ldots \cup V_{v(F)}$,
where $V_i$'s are disjoint sets of size $n / v(F)$, and for each $ij \in
E(G)$ put a perfect matching between $V_i$ and $V_j$. The resulting graph
$H$ is called a \emph{lift} of $F$. It is not difficult to check that regardless of which perfect
matching we choose, we have $m(H) = m(F)$. The number of such (labelled) graphs $H$ 
is  
\begin{equation} \label{eq:lift_count}
    \left( \left( \frac{n}{v(F)} \right)! \right)^{e(F)} > \left(
    \frac{n}{3v(F)} \right)^{n e(F) / v(F)}.
\end{equation}
We use this bound to prove Proposition \ref{prop:lower_bound}.

\begin{proof}[Proof of Proposition \ref{prop:lower_bound}]
The result of R\'ucinski and Vince~\cite{rucinski86balanced} implies that for every $d \in \mathbb{Q}$, $d \ge 1$, there exists a balanced graph $F$ with $m(F) = d$. As a lift $H$ of $F$ has maximum degree $D = v(F)$, we have $H \in \cH_d^D(n)$.

Suppose that a graph $G$ contains every lift of $F$ of order $n$, and let $M =
e(G)$. As every lift contains exactly $n e(F) / v(F)$ edges, by \eqref{eq:lift_count} we necessarily
have
\begin{equation} \label{eq:M_lower_bound}
  \binom{M}{n e(F) / v(F)} n! > \left( \frac{n}{3v(F)} \right)^{n e(F) / v(F)},
\end{equation}
as otherwise there is a lift of $F$ which does not appear in $G$. Note that the
$n!$ term on the left hand side takes into account that every choice of $n e(F) /
v(F)$ edges accounts for at most $n!$ different labeled subgraphs. We can further
upper bound the left hand side of \eqref{eq:M_lower_bound} as follows:
$$
    \binom{M}{n e(F) / v(F)} n! < \left( \frac{3M}{n} \right)^{n e(F) / v(F)}
    n^n = \left( \frac{3 M}{n^{1 - v(F) / e(F)}} \right)^{n e(F) / v(F)}.
$$
Comparing this with the right hand side of \eqref{eq:M_lower_bound}, we conclude
$$
    M > \frac{1}{9 v(F)} n^{2 - v(F) / e(F)} = \frac{1}{9 v(F)} n^{2 - 1/m(F)}.
$$
\end{proof}

\section{Preliminaries}
\label{sec:preliminaries}

In the following lemma we identify a graph with its edge set. We say that a
graph is \emph{unicyclic} if it contains at most one cycle. 
The proofs of all our three main theorems are based on the decomposition 
given by the following lemma. Its proof applies some basic 
results from Matroid Theory, see, e.g, \cite{Welsh} for the relevant notions.

\begin{lemma} \label{lemma:matroid}
  Let $H$ be a simple graph 
  satisfying $m(H) \geq 1$ and $H^{(b)}$ a multigraph obtained from $H$ by
  duplicating each edge $b \in \mathbb{N}$ times. Then there exists a
  partition $H^{(b)} = H_1 \cup \ldots \cup H_k$, where $k = \lceil b \cdot m(H) \rceil$,
  such that, for every $i \in [k]$, each component of $H_i$ is a simple unicyclic graph.
\end{lemma}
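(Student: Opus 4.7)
The plan is to apply Edmonds' matroid partition theorem to a carefully chosen matroid on $E(H^{(b)})$.  Let $BM_H$ denote the bicircular matroid of the simple graph $H$, whose independent sets are precisely the pseudoforest subgraphs of $H$.  Form the $b$-fold parallel extension $M'$ of $BM_H$ by replacing each edge $e\in E(H)$ by $b$ mutually parallel copies.  A subset $I\subseteq E(H^{(b)})$ is independent in $M'$ if and only if it contains at most one copy of each underlying edge and its underlying simple subgraph is a pseudoforest in $H$; equivalently, every component of $I$ is a simple unicyclic graph.  Thus the required decomposition is precisely a partition of $E(H^{(b)})$ into $k$ independent sets of $M'$.

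By Edmonds' matroid partition theorem such a partition exists if and only if $|S|\le k\cdot r_{M'}(S)$ for every $S\subseteq E(H^{(b)})$.  From the definition of the parallel extension, $r_{M'}(S)=r_{BM_H}(J)$ where $J=\operatorname{supp}(S)\subseteq E(H)$ is the set of underlying edges appearing in $S$; moreover $|S|\le b|J|$.  Since $k\ge b\cdot m(H)$, it therefore suffices to prove the single density inequality
\[
    |J| \;\le\; m(H)\cdot r_{BM_H}(J)\qquad\text{for every } J\subseteq E(H),
\]
which is a statement about the simple graph $H$ alone.

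To verify this, I would decompose the subgraph $(V(J),J)$ into its tree components, with edge set $J_a$, and its cyclic components, with edge set $J_c$ (each containing at least one cycle).  A standard computation for the bicircular matroid gives $r_{BM_H}(J)=|J_a|+|V(J_c)|$, since each tree component contributes its number of edges while each cyclic component contributes its full number of vertices.  Because $J_c$ is a subgraph of $H$, we have $|J_c|\le m(H)\cdot|V(J_c)|$, and combining with $m(H)\ge 1$ yields $|J|=|J_a|+|J_c|\le m(H)|J_a|+m(H)|V(J_c)|=m(H)\cdot r_{BM_H}(J)$, as needed.  The main subtlety is choosing the right matroid: the bicircular matroid of the multigraph $H^{(b)}$ is the naive candidate, but it treats a pair of parallel copies as an allowed $2$-cycle, so its pseudoforest partitions need not be simple; using the parallel extension of $BM_H$ instead forces parallel copies into distinct parts, and the hypothesis $m(H)\ge 1$ is used precisely at the last step of the density inequality.
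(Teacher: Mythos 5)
Your proposal is correct and follows essentially the same route as the paper: both apply Edmonds' matroid partition theorem to the matroid on $E(H^{(b)})$ whose independent sets are the sub-multigraphs with simple unicyclic components, and both reduce to the rank inequality coming from the density bound $e(H') \le m(H)\, v(H')$. The only difference is presentational: you obtain the matroid as the $b$-fold parallel extension of the standard bicircular matroid of $H$ (with its known rank function), whereas the paper verifies the base-exchange axiom for this matroid directly and bounds $|H'|/r(H')$ by hand.
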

\begin{proof}
  It suffices to prove the statement in the case $H$ is connected.
  
  Let $\mathcal{B}$ be the family of all spanning subgraphs $B \subseteq H^{(b)}$
  which are simple, connected, and unicyclic. We will shortly prove that $\mathcal{M} =
  (E(H^{(b)}), \mathcal{B})$ is a matroid, with $\mathcal{B}$ being the family of
  bases of $\mathcal{M}$. Assume for now that this is true. Consider some $H'
  \subseteq H^{(b)}$, and let $H''$ be the graph obtained from $H'$ by removing
  duplicate edges. Then the rank $r(H')$ of $H'$ in $\mathcal{M}$ is $v(H'')$
  if $H''$ contains a cycle, and $v(H'') - 1$ otherwise (and note that $v(H') =
  v(H'')$). In the former case we have
  $$
    |H'| / r(H') = e(H') / r(H') \le b \cdot e(H'') / v(H'') \le b \cdot m(H).
  $$
  In the latter case we necessarily have $e(H'') < v(H'')$, thus 
  $$
    e(H') / r(H') \le b \cdot (v(H'') - 1) / (v(H'') - 1) = b \leq b \cdot m(H).
  $$
  By a result of Edmonds \cite{edmonds65matroid}, one can cover $H$ with
  $\lceil b \cdot m (H) \rceil$ disjoint independent sets from $\mathcal{M}$,
  which proves the lemma.

  Let us verify that $\mathcal{M}$ is indeed a matroid. Consider some distinct
  bases $X, Y \in \mathcal{B}$, and let $e \in X \setminus Y$. We aim to find
  $e' \in Y \setminus X$ such that $(X \setminus \{e\}) \cup \{e'\} \in \mathcal{B}$.  If
  $e$ lies on the cycle of $X$, then $X \setminus \{e\}$ is a tree thus adding
  any edge $e' \in Y \setminus X$ produces a spanning unicyclic graph.
  Otherwise, $X \setminus \{e\}$ breaks $X$ into two components, a tree $X_1$
  and a unicyclic $X_2$. As $e \notin Y$, there has to exist $e' \in Y \setminus
  X$ with one endpoint in $V(X_1)$ and the other in $V(X_2)$.  Therefore, $(X
  \setminus \{e\}) \cup \{e'\}$ is again a spanning unicyclic subgraph.
\end{proof}

In the proofs of Theorem \ref{thm:integer_unbounded} and Theorem \ref{thm:integer}, we make use of the following simple known lemma (see, e.g., \cite[Lemma 2.2]{alon02treecut}).

\begin{lemma}[\cite{alon02treecut}] \label{lemma:tree_split}
  For every forest $F$ with $n$ vertices and every $r \in \mathbb{N}$, there
  exists a subset $R \subseteq V(F)$ of size $|R| \le r$ such that each
  connected component in $F \setminus R$ is of size at most $n/r$.
\end{lemma}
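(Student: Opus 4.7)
The plan is to handle each tree of the forest separately and to build the cut set $R$ greedily via a single post-order traversal. Within a tree $T$, I would root $T$ arbitrarily and process its vertices in post-order, maintaining for each vertex $v \notin R$ a quantity $p(v)$ representing the size of the currently open partial subtree that has $v$ as its topmost vertex (that is, the subtree rooted at $v$ with every piece already sealed off by descendants in $R$ removed).

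Concretely, when I reach a vertex $v$ with children $c_1,\ldots,c_d$ (already processed), I compute
$$S := 1 + \sum_{c_i \notin R} p(c_i).$$
If $S \le n/r$, I set $p(v) := S$ and keep $v \notin R$. If $S > n/r$, I place $v$ into $R$: removing $v$ detaches the open partial subtrees rooted at the children $c_i \notin R$ from the rest of $T$, and each of these becomes a permanent component of $F \setminus R$. By the post-order invariant, each such $c_i$ satisfies $p(c_i) \le n/r$, so every sealed component has at most $n/r$ vertices. After processing the root of $T$, the last open component (if it exists) also has size $\le n/r$ by the same invariant, and small trees of $F$ with at most $n/r$ vertices trigger no cuts at all.

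The remaining task is to bound $|R|$. Whenever a vertex $v$ is added to $R$ we have $S > n/r$, and the vertices counted in $S$ are $v$ itself together with vertices of children's open subtrees that are sealed off at this step. Thus distinct vertices of $R$ charge pairwise disjoint sets of more than $n/r$ vertices of $F$, and summing over $R$ gives $|R| \cdot (n/r) < n$, i.e., $|R| < r$. The only real subtlety is keeping this accounting honest: one must verify that once a child's subtree is sealed, its vertices are never recharged to a later cut vertex, which is automatic because post-order processing never revisits a descendant and each sealed subtree is disconnected from the part of $T$ still being traversed.
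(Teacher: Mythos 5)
Your proof is correct: the post-order invariant $p(c)\le n/r$ for every processed vertex $c\notin R$ guarantees that each sealed piece and the final open piece of every tree has at most $n/r$ vertices, and the charging argument (each cut vertex absorbs a set of more than $n/r$ vertices, and these sets are pairwise disjoint because everything below a cut vertex is separated from the rest of the traversal) gives the slightly stronger bound $|R|<r$. The paper itself does not prove this lemma but cites it as Lemma 2.2 of \cite{alon02treecut}; your greedy ``cut at the lowest vertex whose accumulated open subtree exceeds $n/r$'' argument is essentially the standard proof of that result, so there is nothing to flag.
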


In the proofs of Theorems \ref{thm:integer_unbounded} and \ref{thm:rational}, we use the notion of a graph \emph{blowup}, thus we define it here for the reference.

\begin{definition} \label{def:blowup}
    Given a graph $G$ and $b \in \mathbb{N}$, we define a \emph{$b$-blowup} of $G$ to be a graph $\Gamma$ on the vertex set $V(\Gamma) = \bigcup_{u \in V(G)} V_u$, where each $V_u$ is of size $b$ and all the sets are disjoint, and there is an edge between $a \in V_u$ and $b \in V_w$ iff $uv \in E(G)$ or $u = v$. In particular, $\Gamma$ has  $v(G) b$ vertices and $v(G) \binom{b}{2} + e(G) b^2$ edges.
\end{definition}

In the proofs of Theorem \ref{thm:rational} and Theorem \ref{thm:integer} we make use of the so-called \emph{$(n, t, \lambda)$-graphs}. These are $t$-regular graphs with $n$ vertices and the second largest absolute eigenvalue at most $\lambda$. For small $\lambda$ such graphs are known to be good expanders. We use the following result of the first author \cite{alon21expander} which provides an explicit construction of such graphs for an almost optimal value of $\lambda$ (note that a construction of Lubotzky, Phillips, and Sarnak \cite{lubotzky88ramanujan} can be used as well, even though it does not provide a construction for every $t$).

\begin{theorem} \label{thm:ndl}
    For every $t \in \mathbb{N}$ and $n \ge n_0(t)$ such that $nt$ is even, there exist an explicit construction of an $(n, t, \lambda)$-graph with $\lambda \le 3 \sqrt{t}$.
\end{theorem}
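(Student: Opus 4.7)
The plan is to construct the desired $(n,t,\lambda)$-graph by combining known explicit Ramanujan (or near-Ramanujan) constructions with graph operations that approximately preserve the bound $\lambda = O(\sqrt{t})$, starting from a base case and extending inductively in both the degree $t$ and the vertex count $n$.

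The base of the construction is the Lubotzky--Phillips--Sarnak family of explicit Ramanujan graphs, which provides $(n_0, t_0, 2\sqrt{t_0-1})$-graphs for $t_0 = p+1$ with $p$ a suitable prime, and for $n_0$ related to $|\mathrm{PSL}_2(q)|$ for primes $q$. To reach an arbitrary degree $t$, I would use Bertrand's postulate to find a prime $p$ with $p+1$ close to $t$, take the corresponding base graph of degree $p+1$, and form its edge-disjoint union with a recursively constructed near-Ramanujan graph of degree $t - p - 1$ on the same vertex set. Since the spectral norm is subadditive under edge-disjoint union, an inductive argument then yields $\lambda \le 3\sqrt{t}$, provided the base case leaves enough slack.

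To reach an arbitrary vertex count $n$, I would use iterated graph $2$-lifts. By the Marcus--Spielman--Srivastava interlacing-families method and its derandomization, one can explicitly construct $2$-lifts of a regular graph whose new eigenvalues remain bounded by $2\sqrt{t-1}$; iterating produces graphs on a flexible range of sizes while preserving near-Ramanujan expansion. Combined with varying the parameter $q$ in the base construction, this covers all sufficiently large $n$. The parity condition that $nt$ be even is a minor technicality, handled by adjoining or removing a single perfect matching in the final step, which perturbs the spectral norm by at most $1$.

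The main obstacle is quantitative bookkeeping. Each lift and each edge-union inflates the spectral-norm bound slightly, and Bertrand's postulate only finds primes within a multiplicative factor of the target degree, so the inductive loss in the degree-boosting step accumulates over the recursion. Ensuring that the cumulative loss across all operations meets $\lambda \le 3\sqrt{t}$, rather than some larger absolute constant, is the delicate part of the argument, and may require tighter base constructions (e.g., near-Ramanujan bipartite graphs from the interlacing polynomial method) to maintain enough slack for the induction to close.
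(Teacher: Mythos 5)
You did not need to reprove this statement: in the paper Theorem~\ref{thm:ndl} is quoted as a black box from \cite{alon21expander} (``Explicit expanders of every degree and size''), which in fact proves a stronger bound of the form $(2+o(1))\sqrt{t}$ for every degree and \emph{every} sufficiently large $n$. Measured against that, your sketch has two genuine gaps. First, the coverage of all vertex counts $n$ is not achieved by the tools you list. LPS graphs exist only on a sparse sequence of sizes (of order $q(q^2-1)/2$ for prime $q$), and iterated $2$-lifts only multiply the size by powers of $2$; the resulting set of achievable sizes $\{n_q\cdot 2^k\}$ has density zero, so ``varying $q$'' plus lifting cannot hit every large $n$ exactly. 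The whole difficulty that \cite{alon21expander} resolves is precisely how to adjust the number of vertices by small additive amounts (deleting a few vertices and repairing $t$-regularity while controlling the eigenvalue perturbation); no step of this kind appears in your proposal, and without it the statement ``for every $n\ge n_0(t)$'' is simply not reached.

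Second, the constant $3$ is not established. Bertrand's postulate only gives a prime $p$ with $t/2 < p+1 \le t$, so your degree-boosting recursion yields $\lambda(t)\le 2\sqrt{t}+\lambda(t')$ with $t'$ possibly as large as $t/2$, and the geometric sum gives roughly $2\sqrt{t}\,(1-1/\sqrt{2})^{-1}\approx 6.8\sqrt{t}$, well above $3\sqrt{t}$. Closing this would require primes (or prime powers, with the congruence conditions LPS/Morgenstern need) in much shorter intervals, e.g.\ of length $O(t^{0.525})$, plus a separate treatment of small $t$ -- you flag this as ``delicate bookkeeping'' but do not resolve it, and as written the induction does not close. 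Two smaller points: the parity condition that $nt$ be even is a hypothesis needed for any $t$-regular graph on $n$ vertices to exist, not a technicality to be fixed by adding or removing a matching (removing a perfect matching changes the degree, and adding one changes the degree by $1$, so it does not repair an odd $nt$); and the Marcus--Spielman--Srivastava $2$-lift theorem is an existence result via interlacing families, so for an \emph{explicit} construction you must invoke its algorithmic derandomization explicitly.
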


It is worth noting that the proof in \cite{alon21expander} and the known results about the Linnik problem imply that $n_0(t) \leq t^{O(1)}$. In particular, this is relevant for the proof of Theorem \ref{thm:rational}.

\section{Graphs with bounded density}

We need the following lemma from Discrepancy Theory. The proof 
applies the Beck-Fiala method~\cite{beckfiala}. We only use it with $q = 1 / 2$, however, as this does not make the proof any easier, we state it in greater generality.

\begin{lemma} \label{l21}
The following holds for any positive integers $t$, $d$ and real $q \in [0,1]$ 
Let $\vv_1, \vv_2, \ldots ,\vv_t$ be a sequence of vectors in $\mathbb{R}^d$, each having $\ell_1$-norm at most $1$. Then there is a subset $I \subseteq \{1,2,\ldots ,t\}$ so that
$$
    \left\lVert\sum_{i \in I} \vv_i - q  \sum_{i=1}^t \vv_i \right\rVert_{\infty} < 1.
$$
\end{lemma}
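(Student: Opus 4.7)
The plan is to apply the Beck-Fiala iterative rounding method. Setting $y_i = x_i - q$ where $x_i \in \{0,1\}$ indicates whether $i \in I$, the task becomes: find $y_i \in \{-q, 1-q\}$ with $\|\sum_i y_i \vv_i\|_\infty < 1$. Start from $y_i = 0$ for all $i$ (so $\sum y_i \vv_i = 0$) and iteratively round entries to the endpoints of $[-q, 1-q]$ while controlling the drift of each coordinate sum.

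At any moment, let $F \subseteq \{1,\dots,t\}$ be the set of currently fractional indices, and call a coordinate $j \in \{1,\dots,d\}$ \emph{active} if $\sum_{i \in F} |\vv_i[j]| > 1$; write $A$ for the set of active coordinates. For every $j \in A$ maintain the linear constraint $\sum_{i \in F} y_i \vv_i[j] = \mathrm{const}$ (the value at the present time). By double counting,
$$
    |A| < \sum_{j \in A}\sum_{i \in F} |\vv_i[j]| \;=\; \sum_{i \in F}\sum_{j \in A} |\vv_i[j]| \;\le\; \sum_{i \in F}\|\vv_i\|_1 \;\le\; |F|,
$$
so the affine subspace cut out by the active equations has positive dimension in the $|F|$ fractional variables. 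Move $y$ along a nonzero direction there until an event triggers: either some $y_i$ reaches a boundary (and leaves $F$), or some coordinate's $\ell_1$-mass drops to at most $1$ (and becomes inactive). Since $|F| + |A|$ strictly decreases at each event, the process terminates with $F = \emptyset$, leaving every $y_i \in \{-q, 1-q\}$.

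To bound the final error, fix $j$ and let $t_j$ be the moment $j$ first became inactive (or time zero, if $j$ was never active). The constraint enforces $\sum_i y_i \vv_i[j] = 0$ at time $t_j$, and after $t_j$ only the indices $i \in F(t_j)$ can still move. Each such $y_i(t_j)$ lies strictly inside the open interval $(-q, 1-q)$, so $|y_i^{\mathrm{final}} - y_i(t_j)| < 1$; hence
$$
    \Bigl|\sum_i y_i \vv_i[j]\Bigr| \;\le\; \sum_{i \in F(t_j)} |y_i^{\mathrm{final}} - y_i(t_j)|\cdot |\vv_i[j]| \;<\; \sum_{i \in F(t_j)} |\vv_i[j]| \;\le\; 1,
$$
which is the desired bound. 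The degenerate cases $q \in \{0,1\}$ are handled trivially by taking $I = \emptyset$ or $I = \{1,\dots,t\}$.

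The main subtlety is the strictness of the final inequality: the textbook Beck-Fiala bookkeeping yields only $\le 1$, and recovering $< 1$ relies on the observation that fractional entries at the inactive moment are \emph{strictly} interior to the length-one interval $[-q, 1-q]$, so no fractional index can contribute a full unit of change to a coordinate after its constraint is released.
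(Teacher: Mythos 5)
Your proof is correct and follows essentially the same Beck--Fiala floating-variable argument as the paper, just shifted by the substitution $y_i = x_i - q$ (the paper starts all variables at $x_i = q$, rounds them to $\{0,1\}$, and uses the identical counting, phase, and ``strictly interior at deactivation time'' error analysis). The only cosmetic imprecision is listing ``a coordinate's $\ell_1$-mass drops to at most $1$'' as a separate triggering event: the masses depend only on $F$, so deactivation can occur only when some $y_i$ reaches the boundary, but this does not affect correctness.
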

\begin{proof}
Associate each vector  $\vv_i=(v_{i1},v_{i2}, \ldots ,v_{id})$ with a real
variable $x_i \in [0,1]$. Starting with $x_i=q$ for all $i$, we describe an algorithm for rounding each $x_i$ to either $0$ or $1$ without changing the sum $\sum x_i v_i$ by much. During this algorithm, call a variable
$x_i$ \textit{floating} if $x_i$ lies in the open interval $(0,1)$,  otherwise
(that is, if $x_i \in \{0,1\}$) call it \textit{fixed}. Once a variable becomes
fixed it will stay  fixed until the end.  In each phase, the algorithm  
proceeds as follows. If all variables are fixed, terminate. Otherwise, let
$F\subseteq \{1,2,\ldots, t\}$ denote the set of all indices of floating
variables and consider the following linear system of equations in these
variables.

For every coordinate $j \in \{1,2,\ldots, d\}$ for which  
\begin{equation} \label{e21}
    \sum_{i \in F} |v_{ij}| >1, 
\end{equation}
include the equation
\begin{equation} \label{e22}
    \sum_{i=1}^t x_i v_{ij} = q \sum_{i=1}^t v_{ij}.
\end{equation} 
Note that only the floating variables $\{x_i \colon i \in F\}$ are considered as variables at this point; the fixed variables are already fixed and are treated as constants. During the algorithm, we maintain the property that for every coordinate $j$ for which (\ref{e21}) holds, the equality  (\ref{e22}) holds as well. This is certainly true at the beginning, when $x_i=q$ for all $i$.

By the assumption about the $\ell_1$-norm of the vectors $v_i$, we have
$$
  \sum_{i \in F} \sum_{j=1}^d |v_{ij}| \leq |F|
$$
and therefore the number of
indices $j$ for which (\ref{e21}) holds is strictly smaller than $|F|$. Thus
there are more variables than equations and hence there is a line of solutions.
One can move along this line starting with the existing point on it (that
corresponds to the current value of the floating variables $x_i$) until the
first point in which at least one of the floating variables $x_i$ becomes $0$
or $1$. Fix this variable (as well as any other floating variables that become
$0$ or $1$, if any), and continue with the next phase. Note that the desired
property is maintained since no new coordinate $j$ can satisfy~(\ref{e21}) as
$F$ gets smaller.

Since each phase fixes at least one floating variable, the algorithm must
terminate when all the variables $x_i$ are fixed. Now, for a coordinate $j$,
consider the first phase when condition \eqref{e21} is not satisfied. The
equality \eqref{e22} holds at this phase since it is the first time \eqref{e21}
is not satisfied. Moreover, for the rest of the algorithm, the value of the sum
$\sum_{i=1}^t x_i v_{ij}$ can only change by strictly less than $\sum_{i \in F}
|v_{ij}| \leq 1$, since only the floating variables change after that. This
shows that upon termination, for every index $j\in \{1,2,\ldots, d\}$,
$$
    \left|\sum_{i=1}^t x_i v_{ij} - q \sum_{i=1}^t v_{ij}\right|<1.
$$
Therefore the set $I =\{i: x_i =1\}$  of all indices in which the final value
of $x_i$ is $1$ satisfies the conclusion of the lemma.  
\end{proof}

By repeated application of the previous lemma, we get the following.

\begin{corollary}
\label{c22}
The following holds for any three positive integers $t,d,m=2^k$. Let $\vv_1,\vv_2,
\ldots, \vv_t$ be a sequence of vectors in $\mathbb{R}^d$, each having $\ell_1$-norm at
most $1$. Then there is a partition of the vectors into $m$ pairwise disjoint
sets $\{v_i \colon i \in I_p\}$, $1 \leq p \leq m$, where $[t]=I_1 \cup  \ldots  \cup
I_m$, the sets $I_j$ are pairwise disjoint, and for every $p \in [m]$ we have
$$
\left\lVert\sum_{i \in I_p} \vv_i - \frac{1}{m} \sum_{i=1}^t \vv_i \right\rVert_{\infty} \le \sum_{i = 0}^{k-1} 2^{-i} < 2.
$$
\end{corollary}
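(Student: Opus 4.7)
The plan is to apply Lemma \ref{l21} recursively with $q = 1/2$ in a balanced binary tree of depth $k = \log_2 m$. At the root, one split of the full list into two halves yields an $\ell_\infty$-error less than $1$ relative to the half-sum; each subsequent level halves the contribution that a further splitting error makes toward the final target $T/m$, producing a geometric series whose value is exactly $\sum_{j=0}^{k-1} 2^{-j}$.

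Formally, I would induct on $k$. The base case $k=0$ is trivial with $I_1 = [t]$ and an empty sum on the right-hand side. For the inductive step, first apply Lemma \ref{l21} to $\vv_1, \ldots, \vv_t$ with $q = 1/2$ to obtain $I \subseteq [t]$ satisfying $\lVert \sum_{i \in I} \vv_i - \tfrac{1}{2} T \rVert_\infty < 1$, where $T = \sum_{i=1}^t \vv_i$. The complementary set $[t] \setminus I$ enjoys the same bound since $(T - \sum_{i \in I}\vv_i) - T/2 = -(\sum_{i \in I}\vv_i - T/2)$. The subfamilies of vectors indexed by $I$ and by $[t] \setminus I$ still have $\ell_1$-norms at most $1$, so the inductive hypothesis applies to each with $m' = 2^{k-1}$.

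Applying the hypothesis to $\{\vv_i : i \in I\}$ with total $T' = \sum_{i \in I} \vv_i$ yields a partition $I = I_1 \cup \cdots \cup I_{m/2}$ with $\lVert \sum_{i \in I_p} \vv_i - T'/m' \rVert_\infty < \sum_{j=0}^{k-2} 2^{-j}$ for every $p$. The triangle inequality combined with
\[
    \lVert T'/m' - T/m \rVert_\infty = \frac{2}{m} \lVert T' - T/2 \rVert_\infty < \frac{2}{m} = 2^{-(k-1)}
\]
then gives $\lVert \sum_{i \in I_p} \vv_i - T/m \rVert_\infty < \sum_{j=0}^{k-2} 2^{-j} + 2^{-(k-1)} = \sum_{j=0}^{k-1} 2^{-j}$, as required. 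Running the same argument on $[t] \setminus I$ produces $I_{m/2+1}, \ldots, I_m$ and completes the partition of $[t]$.

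There is no real obstacle here; the only thing to track carefully is the bookkeeping for the accumulating discrepancies, which telescopes cleanly because the error introduced at depth $j$ of the recursion is divided among $2^{k-j}$ leaves and therefore contributes only $2^{-(k-1-j)}$ to the $\ell_\infty$-deviation of any single $I_p$ from $T/m$. Summing these contributions across the $k$ levels produces the geometric sum $\sum_{j=0}^{k-1} 2^{-j} < 2$.
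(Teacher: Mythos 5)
Your proposal is correct and follows essentially the same route as the paper: induction on $k$, splitting with Lemma \ref{l21} at $q=1/2$, applying the inductive hypothesis to each half, and using the triangle inequality so the per-level errors telescope into the geometric sum $\sum_{j=0}^{k-1}2^{-j}<2$. The only cosmetic differences (base case $k=0$ versus $k=1$, and writing some inequalities as strict where the statement only needs $\le$) do not affect the argument.
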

\begin{proof}
We prove the statement by induction on $k$. For $k = 1$ it is equivalent to Lemma \ref{l21} with $q = 1/2$. Suppose that it hold for $m = 2^{k-1}$, for $k \ge 2$. We show that then it also holds for $m = 2^k$. 

Apply Lemma \ref{l21} with $q = 1 / 2$ to split the vectors into two collections, $[t] = C_1 \cup C_2$, such that for $i \in \{1,2\}$ we have
\begin{equation} \label{eq:linf_Ci}
\left\lVert\sum_{i \in C_i} \vv_i - \frac{1}{2} \sum_{i=1}^t \vv_i \right\rVert_{\infty} < 1.
\end{equation}
By the induction hypothesis, there is a partition $C_1 = I_1 \cup \ldots \cup I_{m/2}$ such that for each $p \in [m/2]$ we have
\begin{equation}     \label{eq:linf_Ci_ind}
\left\lVert\sum_{i \in I_p} \vv_i - \frac{2}{m} \sum_{i \in C_1} \vv_i \right\rVert_{\infty} \le \sum_{i = 0}^{k-2} 2^{-i}.
\end{equation}
By the triangle inequality, from \eqref{eq:linf_Ci} and \eqref{eq:linf_Ci_ind} we conclude
$$  
\left\lVert\sum_{i \in I_p} \vv_i - \frac{1}{m} \sum_{i = 1}^t \vv_i \right\rVert_{\infty} 
= 
\left\lVert 
    \left( \sum_{i \in I_p} \vv_i - \frac{2}{m} \sum_{i \in C_1} \vv_i \right) +
    \frac{2}{m} \left( \sum_{i \in C_1} \vv_i - \frac{1}{2} \sum_{i = 1}^t \vv_i \right)
\right\rVert_{\infty}
\le \sum_{i = 0}^{k-2} 2^{-i} + 2/m.
$$
The same argument applies to $C_2$, which gives a desired partition $[t] = I_1 \cup \ldots \cup I_m$.
\end{proof}

\begin{proof}[Proof of Theorem \ref{thm:integer_unbounded}] 
  Note that it suffices to prove the theorem for $d \in \mathbb{N}$.

  Let $m$ be the smallest power of $2$ that is at least  $n^{1/(d+1)}$, 
and suppose $m \geq 4$. First, form a graph $\Gamma$ on the
vertex set $[m]^d$ where two vertices $\vu = (u_1,u_2,\ldots ,u_d)$ and $\vv = (v_1,v_2,
\ldots ,v_d)$ are connected if $u_i=v_i$ for some $i \in [d]$. The graph $\Gamma^+$ is
the $(3m+3)$-blowup of $\Gamma$ (see Definition \ref{def:blowup}) together with another set $V^+$ of $2 dn/m$  vertices
and all edges incident with at least one of them. Note that $\Gamma^+$ has
$O(dn^{1-1/(d+1)})$ edges. We proceed to show that it is $\cH_d(n)$-universal.

Consider some $H \in \cH_d(n)$. Let $H = H_1 \cup \ldots \cup H_d$
be a decomposition given by Lemma \ref{lemma:matroid} (with $b = 1$), and
recall that each component, of each $H_i$, is unicyclic. First form $R' \subset V(H)$ as follows: For every $i \in [d]$ and every component of $H_i$ of size at least $m$, take one vertex from a cycle in that component (if such exist). This adds up to at most $d n / m$ vertices. Next, by applying Lemma \ref{lemma:tree_split} with $F = H_i \setminus R'$ (which is now a forest) and  $r = n/m$ for each $i \in [d]$, we obtain a set $R \subseteq V(H)$ of size $|R|
\le dn/m$ such that each connected
component of $H_i \setminus (R \cup R')$ is of size at most $m$.
All the vertices of $R \cup R'$ will be mapped into $V^+$, thus we can set $H=H
\setminus (R \cup R')$ and $H_i = H_i \setminus (R \cup R')$. 

Let $\cC_i$ denote the family of connected components in $H_i$ (we identify a connected component by its vertex set). For each $h \in
V(H)$, let $c_i(h)$ denote the component $K \in \cC_i$ that contains $h$.  We
show, by induction on $i$, that there exist functions $\phi_i \colon \cC_i
\mapsto [m]$ such that for each $i \in [d]$ and every $\vv=(v_1,v_2, \ldots ,v_i)
\in [m]^i$ we have
\begin{equation}
\label{e23}
|S_\vv| \leq \frac{n}{m^i} +2m +3
\end{equation}
where 
$$
S_\vv=\{h \in V(H): \phi_1(c_1(h))=v_1, \phi_2(c_2(h))=v_2, \ldots ,
\phi_i(c_i(h))=v_i \}.
$$
Once we have this for $i=d$, by injectively mapping each $S_\vv$ into 
the blowup of $\vv$, for $\vv \in [m]^d$, we obtain an embedding
of $H$ in $\Gamma^+$. 

Inequality \eqref{e23} trivially holds for $i=0$.  Suppose that (\ref{e23})
holds for some $i-1$, for $i \in [d]$.  We show we can find $\phi_i$ so that it
holds for $i$. For each connected component $K \in \cC_i$
define a vector $\vv_K$ of length $m^{i-1}$ indexed by the vectors $\vu
=(u_1,u_2, \ldots ,u_{i-1}) \in [m]^{i-1}$ as follows:  The coordinate of $\vv_K$
indexed by $\vu$ is the number of  vertices $h \in K$ such that
$$
\phi_1(c_1(h))=u_1,\; \phi_2(c_2(h))=u_2, \ldots 
,\; \phi_{i-1}(c_{i-1}(h))=u_{i-1}.
$$
Note that the $\ell_1$-norm of each $\vv_K$ is the number of vertices of
$K$, which is at most $m$. In addition, the sum of all the vectors
$\vv_K$ in each coordinate $\vu$ is exactly $|S_\vu|$, which, by the
induction hypothesis, is at most $n/m^{i-1}+2m+3$. 

By Corollary \ref{c22}
these vectors can be partitioned into $m$ pairwise
disjoint collections so that the sum of the vectors in each
collection, and with respect to each coordinate, is at most
$$
\frac{n/m^{i-1}+2m+3}{m}+2m \leq n/m^i+2m+3.
$$
The value of $\phi_i(K)$ is now set to be the index of the
collection containing $K$, implying the required inequality for
$i$ and completing the proof.
\end{proof}

\section{Graphs with bounded density and degree}

The basic idea behind the proofs of Theorem \ref{thm:rational} and 
Theorem \ref{thm:integer} is similar to that 
in the proof of Theorem \ref{thm:integer_unbounded}. In Theorem \ref{thm:integer_unbounded} we obtain $n^{-1/(d+1)}$ instead of desired $n^{-1/d}$ because, in each of the $d$ steps, we assign the same coordinate to all the vertices of a connected component in $H_i$. Intuitively, if same vertices of $H$ belong to the same connected component across each $H_i$, this is not sufficient to disambiguate them and we are forced to take a small blowup at the end.

When $H$ has bounded maximum degree, we avoid this by using the following idea, at least in the case $d \in \mathbb{N}$: Our basic graph $\Gamma$ is again defined on the vertex set $[m]^d$ (now with $m \approx n^{1/d}$), however this time we connect $\vv, \vu \in [m]^d$ by an edge if some $v_i$ and $u_i$ are connected by an edge in a bounded-degree expander $G$ on the vertex set $[m]$, which we fix upfront. Instead of mapping all the vertices of one component of $H_i$ into a single coordinate, we disperse them across $[m]$ by using edges of the expander $G$. In Theorem \ref{thm:integer} we can make this approach disambiguate all the vertices of $H$, thus avoiding the use of a final blowup all together. In Theorem \ref{thm:rational} the number of vertices which are pairwise ambiguous ends up being of order $2^{O(\sqrt{\log n})}$, thus we take a very small blowup at the end -- significantly smaller than in the proof of Theorem \ref{thm:integer_unbounded} -- to deal with this.

The proof of Theorem \ref{thm:rational}, presented next, borrows ideas of using random walks in expanders from \cite{alon07sparsebounded}. One significant difficulty in the proof of Theorem \ref{thm:rational} is that we are not able to split $H_i$ into small connected components and we have to deal with the whole $H_i$ at once, which further emphasizes dispersion via expanders. The proof of Theorem \ref{thm:integer} generalizes the approach from \cite{alon08optimal} 
from embedding paths in expanders, in a specific way, 
to embedding bounded-degree trees. This is done using some of the
ideas in \cite{feldman88nonblocking} and \cite{draganic22rolling}.

\subsection{Density bounded by a rational}

We use the following well known property of random walks on expanders,
see, e.g., \cite{HLW}.

\begin{lemma} \label{lemma:random_walk}
    Let $G$ be an $(n, t, \lambda)$-graph, and consider a random walk starting in a given vertex $v \in V(G)$. The probability that after exactly $\ell$ steps we finish in a vertex $w \in V(G)$ is at most
    $$
        1/n + (\lambda / t)^\ell. 
    $$
\end{lemma}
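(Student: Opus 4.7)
The plan is to use the standard spectral decomposition of the random walk transition operator. Because $G$ is $t$-regular, the transition matrix of the simple random walk is $M = A/t$, where $A$ is the adjacency matrix of $G$; this is a real symmetric matrix whose eigenvalues are $1 = \mu_1 \ge \mu_2 \ge \ldots \ge \mu_n \ge -1$ with $|\mu_i| \le \lambda/t$ for all $i \ge 2$, and the top eigenvector is the all-ones vector $\mathbf{1}/\sqrt{n}$. The probability of ending at $w$ after $\ell$ steps, starting from $v$, is precisely the $w$-coordinate of $M^\ell e_v$, where $e_v$ is the standard basis vector at $v$.

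First I would write the initial distribution as $e_v = \tfrac{1}{n}\mathbf{1} + f$, where $f$ is orthogonal to $\mathbf{1}$, and note that $\|f\|_2^2 = 1 - 1/n \le 1$. Since $M\mathbf{1} = \mathbf{1}$ and $M$ shrinks every vector orthogonal to $\mathbf{1}$ by a factor of at most $\lambda/t$, I would then apply $M^\ell$ to get
$$
M^\ell e_v = \tfrac{1}{n}\mathbf{1} + M^\ell f,
\quad\text{with}\quad
\|M^\ell f\|_2 \le (\lambda/t)^\ell \|f\|_2 \le (\lambda/t)^\ell.
$$

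To conclude, I would read off the $w$-coordinate, using that the $\ell_\infty$-norm is bounded by the $\ell_2$-norm on any finite-dimensional real space:
$$
(M^\ell e_v)_w \;\le\; \frac{1}{n} + \|M^\ell f\|_\infty \;\le\; \frac{1}{n} + \|M^\ell f\|_2 \;\le\; \frac{1}{n} + (\lambda/t)^\ell,
$$
which is exactly the claimed bound. There is no real obstacle here; the only thing to be careful about is that $M$ is symmetric (which relies on $t$-regularity), so that the spectral decomposition applies and the operator norm on $\mathbf{1}^\perp$ is genuinely at most $\lambda/t$.
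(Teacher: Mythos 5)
Your argument is correct: the spectral decomposition $e_v = \tfrac1n\mathbf{1} + f$ with $f \perp \mathbf{1}$, the contraction $\lVert M^\ell f\rVert_2 \le (\lambda/t)^\ell$, and the bound $\lVert \cdot \rVert_\infty \le \lVert \cdot \rVert_2$ give exactly the stated estimate. The paper does not prove this lemma but cites it as a well-known fact from \cite{HLW}, and your proof is precisely the standard argument that citation refers to.
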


\paragraph{Randomized tree homomorphism.} Given a tree $T$ with the designated root $r$ and a graph $G$, we use the following randomized procedure for constructing a homomorphism $\phi \colon T \hookrightarrow G$:
\begin{enumerate}[(i)]
    \item Consider any ordering $h_1, \ldots, h_n$ of $V(T)$ such that $h_1 = r$ and, for each $i \ge 2$, $h_i$ has exactly one neighbour within $\{h_1, \ldots, h_{i-1}\}$.
    \item Take $s_1 \in V(G)$ to be some upfront chosen vertex in $V(G)$.
    \item For $i = \{2, \ldots, n\}$, sequentially, take $s_i$ to be a neighbour of $s_j$ in $G$ chosen uniformly at random, where $j < i$ is a unique index such that $h_j h_i \in T$.
\end{enumerate}
The homomorphism is then given by $\phi(h_i) := s_i$. Note that the ordering of the vertices $h_2, \ldots, h_n$ plays no role in the distribution of $\phi$, as long as  each vertex other than $h_1$ has exactly one predecessor.

\begin{lemma} \label{lemma:random_tree}
    Let $G$ be an $(m, t, 3\sqrt{t})$-graph where $t = 2^{\sqrt{\log n}}$ and $n \ge m$. Suppose $T$ is a tree with the root $r$, and $U \subseteq V(T) \setminus \{r\}$ a subset such that every two $t, t' \in U \cup \{r\}$ are at distance at least $16\sqrt{\log n}$ in $T$. Let $\phi$ be a random homomorphism $\phi \colon T \hookrightarrow G$ obtained by the described procedure. Then, for any $v \in V(G)$, the size of the set
    $$
        U_v = \{u \in U \colon \phi(u) = v \} 
    $$
    is stochastically dominated by a binomial random variable $B(|U|,1/m + 1/n^3)$.
\end{lemma}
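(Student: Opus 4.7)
The plan is to process the elements of $U$ in a BFS order $u_1, \ldots, u_k$ (non-decreasing depth from $r$ in $T$) and to prove that, for every $i$ and every fixed $v \in V(G)$,
\[
\Pr\!\left[\phi(u_i) = v \ \middle|\ \phi(u_1), \ldots, \phi(u_{i-1})\right] \le \frac{1}{m} + \frac{1}{n^3}.
\]
Since this is a pointwise upper bound on the conditional probability of each indicator $\mathbf{1}[\phi(u_i) = v]$ given its predecessors, a standard coupling with independent Bernoullis shows that $|U_v| = \sum_i \mathbf{1}[\phi(u_i) = v]$ is stochastically dominated by $B(|U|, 1/m + 1/n^3)$, which is what we need.

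To prove the displayed inequality, take $L$ to be an integer close to (and at most) $8\sqrt{\log n}$, and let $\alpha_i$ denote the ancestor of $u_i$ in $T$ at depth $d_T(r, u_i) - L$; this is well-defined, since every $u \in U$ satisfies $d_T(r, u) \ge 16\sqrt{\log n}$. I would view the randomized homomorphism as an independent uniform label in $[t]$ on each edge of $T$, with $\phi(h)$ obtained by starting at $\phi(r)$ and, along the tree path from $r$ to $h$, stepping to the neighbor of the current vertex in $G$ indicated by the label. Conditioned on $\phi(\alpha_i)$, the vertex $\phi(u_i)$ is the endpoint of an ordinary random walk in $G$ of length $L$ started at $\phi(\alpha_i)$, driven by the $L$ labels on the tail edges from $\alpha_i$ to $u_i$, and these labels are independent of every label lying outside this tail.

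The step I expect to be the main obstacle is showing that the tail labels are also independent of $\phi(u_1), \ldots, \phi(u_{i-1})$. A label on a tail edge from some vertex $w$ to its child $w'$ can influence $\phi(u_j)$ only when $u_j$ is a descendant of $w'$ in $T$. Suppose towards contradiction that some $u_j$ with $j < i$ (so $d_T(r, u_j) \le d_T(r, u_i)$ by BFS) descended from such a $w'$, and set $x = \mathrm{LCA}(u_i, u_j)$. Then $x$ lies on the tree path from $w'$ to $u_i$, giving $d_T(u_i, x) \le L - 1$; combined with $d_T(u_i, u_j) \ge 16\sqrt{\log n}$ this forces $d_T(u_j, x) \ge 16\sqrt{\log n} - L + 1$. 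Since also $\mathrm{depth}(x) \ge \mathrm{depth}(w') \ge d_T(r, u_i) - L + 1$, summing yields
\[
d_T(r, u_j) \;=\; \mathrm{depth}(x) + d_T(x, u_j) \;\ge\; d_T(r, u_i) + 16\sqrt{\log n} - 2L + 2 \;>\; d_T(r, u_i),
\]
contradicting the BFS ordering.

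With the independence established, Lemma \ref{lemma:random_walk} applied to the length-$L$ walk gives
\[
\Pr\!\left[\phi(u_i) = v \ \middle|\ \phi(\alpha_i)\right] \le \frac{1}{m} + \left(\frac{\lambda}{t}\right)^{L} \le \frac{1}{m} + \left(\frac{3}{\sqrt{t}}\right)^{L},
\]
and substituting $t = 2^{\sqrt{\log n}}$ together with $L \approx 8\sqrt{\log n}$ bounds the second term by roughly $3^{8\sqrt{\log n}}/n^{4} \le 1/n^3$ for sufficiently large $n$. Marginalizing over $\phi(\alpha_i)$ then yields the desired conditional bound, which completes the proof.
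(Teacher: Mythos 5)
Your proposal is correct and follows essentially the same route as the paper: order $U$ by distance from the root, use the $16\sqrt{\log n}$ separation to exhibit, for each $u_i$, at least $8\sqrt{\log n}$ fresh random steps that are independent of $\phi(u_1),\ldots,\phi(u_{i-1})$, apply Lemma \ref{lemma:random_walk}, and couple with a binomial. The only difference is bookkeeping: the paper generates $\phi$ via a greedy path decomposition (walks of length $|P_i|\ge 8\sqrt{\log n}$ from the first intersection with earlier paths), while you use i.i.d.\ edge labels and a fixed-length ancestor tail justified by an LCA/depth argument, which is an equivalent way of establishing the same independence.
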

\begin{proof}
    Let $u_1, \ldots, u_k$ be an ordering of the vertices in $U$ such that if $u_j$ is closer to $r$ than $u_i$, then $j < i$. Let $P_1$ be the path from $r$ to $u_1$ and set $x_1 = r$. For each $2 \le i \le k$, define the path $P_i$ as follows:
    \begin{itemize}        
        \item Let $x_i \in V(T)$ be the first vertex on a path from $u_i$ to $r$ which belongs to $\bigcup_{j < i} V(P_j)$;
        \item Set $P_i$ to be the path from $x_i$ to $u_i$.
    \end{itemize}
    Importantly, for every $i \in [k]$ we have $|P_i| \ge 8 \sqrt{\log n}$. Let us quickly prove this. As $x_i \in \bigcup_{j < i} V(P_j)$ we have $x_i \in V(P_j)$, for some $j < i$. That implies $u_j$ is not further from $r$ than $u_i$, thus the path from $x_i$ to $u_j$ is not larger than $|P_i|$. Therefore $u_i$ and $u_j$ are at distance at most $2|P_i|$, which gives the desired lower bound.
    
    We now describe an equivalent way of generating $\phi$:
    \begin{enumerate}[(i)]
        \item Set $\phi(r)$ to be the upfront chosen vertex $s_1$ in $V(G)$.
        \item For each $i \in [k]$, sequentially, extend the partial mapping $\phi$ to $V(P_i) \setminus \{x_i\}$ by taking a random walk of length $|P_i|$ which starts in $\phi(x_i)$.
        \item Let $f_1, \ldots, f_{k'}$ be an ordering of the vertices in
        $$
            V_P = V(T) \setminus \bigcup_{i \in [k]} V(P_i)
        $$
        such that each $f_i$ has exactly one neighbour $f_i' \in V_P \cup \{f_1, \ldots, f_{i-1}\}$. Sequentially, for $i \in [k']$, extend $\phi$ to $f_i$ by taking a random neighbour of $\phi(f_i')$.
    \end{enumerate}
    By Lemma \ref{lemma:random_walk} we have
    $$
        \Pr[\phi(u_i) = v \mid \phi(u_1), \ldots, \phi(u_{i-1})] \le 1/m + 
 (4 / \sqrt{t})^{|P_i|} < 1 / m + 1/n^3,
    $$
    thus the conclusion of the lemma follows.
\end{proof}

Finally, we make use of the following lemma which allows us to treat unicyclic graphs as trees.

\begin{lemma} \label{lemma:tree_square}
    Let $H$ be a connected unicyclic graph. Then there exists a tree $T$ on
    the same vertex set such that $\Delta(T) \le \Delta(H)$ and $H \subseteq
    T^2$.
\end{lemma}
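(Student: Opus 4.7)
The plan is to build $T$ by keeping every edge of $H$ that is not on its cycle, and replacing the cycle itself with a path on the same vertex set whose square still contains that cycle. If $H$ is acyclic, set $T := H$. Otherwise, let $C = u_1 u_2 \cdots u_k u_1$ be the unique cycle of $H$, and let $F_1, \ldots, F_k$ be the connected components of $H \setminus E(C)$, with $u_i \in V(F_i)$. Each $F_i$ is a tree rooted at $u_i$, and the $V(F_i)$'s partition $V(H)$.

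First I would arrange $u_1, \ldots, u_k$ along a path $P$ in the \emph{zigzag} order
$$
u_1,\, u_2,\, u_k,\, u_3,\, u_{k-1},\, u_4,\, u_{k-2},\, \ldots
$$
(continuing until every cycle vertex appears, splitting into two cases according to the parity of $k$). A short check shows that under this ordering any cyclically consecutive pair $u_i, u_{i+1 \pmod k}$ is at position-distance at most $2$ along $P$; equivalently, $C_k$ has bandwidth $2$ with respect to this arrangement. I would then define $T$ as the union of $P$ with the rooted trees $F_1, \ldots, F_k$, each $F_i$ glued to $P$ at its root $u_i \in V(P)$.

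Verification of the three required properties is routine. The graph $T$ is a tree on $V(H)$, since it is connected and has exactly $|V(H)| - 1$ edges (namely $k-1$ edges from $P$ and $|V(F_i)| - 1$ from each $F_i$). A vertex $v \in V(F_i) \setminus \{u_i\}$ satisfies $\deg_T(v) = \deg_H(v)$, while for a cycle vertex $u_i$ we have
$$
\deg_T(u_i) \le \deg_P(u_i) + \deg_{F_i}(u_i) \le 2 + (\deg_H(u_i) - 2) = \deg_H(u_i),
$$
so $\Delta(T) \le \Delta(H)$. Finally, every edge of $H$ lying in some $F_i$ is already an edge of $T$, and each cycle edge $u_iu_{i+1}$ satisfies $\mathrm{dist}_T(u_i, u_{i+1}) = \mathrm{dist}_P(u_i, u_{i+1}) \le 2$, since each $F_j$ meets $P$ only at $u_j$, so distances between cycle vertices in $T$ are measured entirely along $P$. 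Hence $H \subseteq T^2$.

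The only real obstacle is justifying the bandwidth-$2$ arrangement of $C_k$, which is a classical observation (essentially Chv\'atalov\'a's computation of the bandwidth of the cycle). I would establish it by verifying the zigzag construction above case-by-case on the parity of $k$; this is a short but genuine check, and everything else in the proof is immediate once it is in hand.
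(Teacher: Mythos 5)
Your proof is correct and follows essentially the same route as the paper: delete the cycle edges and replace them by a zigzag (bandwidth-$2$) path on the cycle vertices, so that consecutive cycle vertices end up at distance at most $2$ in the resulting tree. The paper uses the ordering $v_1 v_k v_2 v_{k-1} v_3 \ldots$ rather than your $u_1, u_2, u_k, u_3, u_{k-1}, \ldots$, but this is the same idea, and your degree and distance checks are valid.
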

\begin{proof}
    Let $v_1, \ldots, v_k \in V(H)$ be the vertices along the cycle in $H$.
    Form a tree $T$ by removing the edges on the cycle in $H$, and adding the
    edges on the path $v_1 v_k v_2 v_{k-1} v_3 v_{k-2} \ldots v_{k'}$, where
    $k' = \lceil (k+1)/2 \rceil$.
\end{proof}

We are ready to prove Theorem \ref{thm:rational}.

\begin{proof}[Proof of Theorem \ref{thm:rational}]
    Suppose $d = a / b$, for some $a, b \in \mathbb{N}$ with $a \ge b$. Let $m = n^{1/a}$ and $t = 2^{\sqrt{\log n}}$, and let $G$ be an 
	$(m, t, 3\sqrt{t})$-graph on the vertex set $[m]^a$ (see Theorem \ref{thm:ndl}). We form the graph $\Gamma$ as follows: $V(\Gamma) = [m]^a$, and two vertices
    $\vv = (v_1, \ldots, v_a)$ and $\vw = (w_1, \ldots, w_a)$ are connected by an
    edge iff there exist at least $b$ distinct indices $i_1, \ldots, i_b \in
    [a]$ such that $v_j w_j \in G^2$, for each $j \in \{i_1, \ldots, i_b\}$.
    Finally, take $\Gamma^+$ to be a $(2^{C \sqrt{\log n}})$-blowup of
    $\Gamma$, for $C$ being a sufficiently large constant. The graph $\Gamma^+$ has
    $$
        O\left( n^{2 - b/a} \cdot 2^{2C \sqrt{\log n}} \right)
    $$
    edges. It remains to show that $\Gamma^+$ is $\mathcal{H}_{d}^D(n)$-universal.

    Consider some $H \in \mathcal{H}_{d}^D(n)$. Applying
    Lemma \ref{lemma:matroid} with $b$, we obtain subgraphs $H_1, \ldots, H_a \subseteq H$ such that each connected component, of each $H_i$, is unicyclic, and each edge $e \in H$ belongs to exactly $b$ of these subgraphs. By Lemma \ref{lemma:tree_square}, for each component $H_i$ there exists a forest $T_i$ such that $H_i \subseteq T_i^2$ and $\Delta(T_i) \le D$. Therefore, any homomorphism of $T_i$ into $G$ is also a homomorphism of $H_i$ into $G^2$. By adding edges across leaves of some components in $T_i$, we can assume that $T_i$ is a spanning tree on the vertex set $V(H)$. Let $r \in V(H)$ be an arbitrary vertex which will serve as the
    root of every tree $T_i$. 
    
    Form an auxiliary graph $A$ by taking an edge between $h, h' \in V(H)$ iff
    they are at distance at most $16 \sqrt{\log n}$ in some $T_i$. Then
    $$
        \Delta(A) \le a D^{16 \sqrt{\log n}}.
    $$
    Take $U_0 \subseteq V(H)$ to be the set of all vertices in $V(H)$ which are neighbours of $r$ in $A$, together with $r$ itself. Using Hajnal-Szemer\'edi theorem, partition $V(H) \setminus U_0$ into independent sets $U_1, \ldots, U_{\Delta(A) + 1}$ in $A$.

    Our goal is to find homomorphisms $\phi_i \colon T_i \hookrightarrow G$ such that, for each $i \in [a]$, $\mathbf{v} = (v_1, \ldots, v_i) \in [m]^i$ and $j \in \{1, \ldots, \Delta(A) + 1\}$, we have
    \begin{equation} \label{eq:s_size}
        |S_{\mathbf{v}}^j| \le \max\{2n^{(a - i)/a}, 4 \log n\}, 
    \end{equation}
    where
    $$
        S_{\mathbf{v}}^j = \{ h \in U_j \colon \phi_1(h) = v_1, \ldots, \phi_i(h) = v_i \}.
    $$
    This implies that, for every $\mathbf{v} = (v_1, \ldots, v_a) \in [m]^a$, the set
    $$
        S_{\mathbf{v}} = \{ h \in V(H) \colon \phi_1(h) = v_1, \ldots, \phi_a(h) = v_a \}
    $$
    is of size
    $$
        |S_{\mathbf{v}}| \le |U_0| + (\Delta(A) + 1) \cdot 4 \log n < 2^{C \sqrt{\log n}},
    $$
    thus by injectivelly mapping $S_{\mathbf{v}}$ into the blowup of $\vv$, we obtain a copy of $H$ in $\Gamma^+$.

    Suppose that we have found $\phi_1, \ldots, \phi_{i-1}$ such that \eqref{eq:s_size} holds. Let $\phi_i \colon T_i \hookrightarrow G$ be a random homomorphism generated as described at the beginning of this section. By Lemma \ref{lemma:random_tree} and standard estimates of the binomial distribution, this holds for one particular choice of $\mathbf{v} = (v_1, \ldots, v_i)$ and $j \in \{1, \ldots, \Delta(A) + 1\}$ with probability at least $1 - 1 / n^2$. 
Therefore, by the union-bound, it holds for all choices with positive probability, thus a desired homomorphism exists.
\end{proof}

\subsection{Density bounded by an integer}

The following lemma replaces the use of randomness in the proof of Theorem \ref{thm:rational} and is the core of the proof of Theorem \ref{thm:integer}.

\begin{lemma} \label{lemma:tree}
    For every $D \in \mathbb{N}$ there exist $t \in \mathbb{N}$ and $\eps > 0$ such that the following holds. Let $G$ be an $(n, t, 3\sqrt{t})$-graph, and let $T$ be a tree with  $v(T) \le n / (3t)$ vertices and maximum degree $\Delta(T) \le D$. Then for any family of subsets $\{S_v \subseteq V(G)\}_{v \in V(T)}$ with $|S_v| \ge (1 - \eps)n$ for each $v \in V(T)$, there exists an embedding $\phi \colon T \rightarrow G$ such that $\phi(v) \in S_v$ for every $v \in V(T)$.
\end{lemma}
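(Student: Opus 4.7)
The plan is to adapt the Feldman--Friedman--Pippenger (FFP) tree embedding technique to respect the prescribed sets $S_v$, leveraging the spectral expansion of $G$. I would fix $t = t(D)$ sufficiently large and $\varepsilon = \varepsilon(D)$ sufficiently small. Two consequences of the Expander Mixing Lemma are central. First, for any $S \subseteq V(G)$ with $|S| \ge (1-\varepsilon)n$, the atypical set $B(S) := \{v : |N(v) \cap S| < (1-2\varepsilon)t\}$ has size at most $9n/(\varepsilon t)$; this follows from bounding $e(B(S), V \setminus S)$ two ways. Second, for any $X \subseteq V(G)$ with $|X| \le n/(3t)$, most vertices $v$ have $|N(v) \cap X|$ close to the average $t|X|/n \le 1/3$, with concentration controlled by the spectral bound $\lambda \le 3\sqrt{t}$ (a second moment / codegree calculation via $A^2 \approx (t^2/n)J$).

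I would root $T$ arbitrarily and process its vertices in a BFS order $v_1, v_2, \ldots, v_m$. The embedding $\phi$ is built inductively, with the invariant that for each unembedded vertex $v$ whose parent $p$ has been embedded there is a \emph{private reservation} $R_v \subseteq N(\phi(p)) \cap S_v$ of size $K = K(D)$, with all $R_v$ pairwise disjoint and disjoint from the used set $U := \phi(\mathrm{embedded})$. To embed $v_i$, I would pick $\phi(v_i) \in R_{v_i}$ that is \emph{good} for each unembedded child $u$ of $v_i$: $\phi(v_i) \notin B(S_u)$ and $|N(\phi(v_i)) \cap (U \cup \bigcup_w R_w)|$ is small. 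Then for each child $u$, carve out a new reservation $R_u \subseteq N(\phi(v_i)) \cap S_u$ of size $K$, disjoint from the updated $U$ and from the other new reservations.

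The main obstacle is the simultaneous maintenance of the invariant. Two conditions must hold at every step: (1) $R_{v_i}$ contains a vertex that is good for all at most $D$ unembedded children of $v_i$; and (2) at this $\phi(v_i)$, the set $N(\phi(v_i)) \cap S_u \setminus (U \cup \bigcup_{w \ne u} R_w)$ has size at least $K$ for every child $u$. Condition (1) is handled proactively: when we set up $R_{v_i}$ (at the time $p$ was embedded), we ensured that it avoids the bad union $\bigcup_u B(S_u)$ over $v_i$'s children, which has size at most $9Dn/(\varepsilon t)$; that we can always carve such an $R_{v_i}$ inside $N(\phi(p))$ uses that $\phi(p)$ was itself chosen so that $|N(\phi(p)) \cap \bigcup_u B(S_u)|$ is small (second EML consequence). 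For (2), typicality yields $|N(\phi(v_i)) \cap S_u| \ge (1-2\varepsilon)t$, while $|U| + \sum_w |R_w| \le (K+1)v(T) \le (K+1)n/(3t)$; by an averaging argument over the $K$ vertices in $R_{v_i}$, at least one choice has intersection $O(K)$ with $U \cup \bigcup R_w$. Picking $K \gg D$ and $t \gg K/\varepsilon$ in this order yields $(1-2\varepsilon)t - O(K) \ge KD$, leaving room for one reservation of size $K$ per child.

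The hardest step will be the single-vertex non-concentration bound controlling $|N(\phi(v_i)) \cap (U \cup \bigcup R_w)|$: this is not a direct EML estimate but follows from a codegree/variance computation on the $K$ vertices of $R_{v_i}$, exploiting $\lambda \le 3\sqrt{t}$ to show that the ``rare bad'' choices inside $R_{v_i}$ are at most a fraction of $K$. Once this is in place, careful bookkeeping of the constants ($K$ in terms of $D$, then $\varepsilon$ in terms of $D$, then $t$ in terms of $D$ and $\varepsilon$) preserves both invariants at every embedding step and produces the desired $\phi$.
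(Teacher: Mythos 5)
There is a genuine gap, and it sits exactly where you place your ``hardest step'': the claim that, at embedding time, the constant-size reservation $R_{v_i}$ contains a vertex whose neighbourhood meets the evolving set $X = U \cup \bigcup_w R_w$ in only $O(K)$ vertices. No spectral estimate can deliver this for a \emph{fixed set of constant size}. The mixing lemma gives $e(R_{v_i},X) \le tK|X|/n + 3\sqrt{t}\sqrt{K|X|}$, and with $|X| = \Theta\bigl((K+1)n/t\bigr)$ the error term is $\Theta(\sqrt{Kn})$, so the average intersection over the $K$ vertices of $R_{v_i}$ is only bounded by $\Theta(\sqrt{n/K})$, not $O(K)$. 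The variance (codegree) bound $\sum_{v}\bigl(|N(v)\cap X| - t|X|/n\bigr)^2 \le 9t|X|$ only says that the set of vertices deviating by more than a constant $s$ has size $O(Kn/s^2)$ --- a positive proportion of $n$ --- and nothing forces your pre-chosen $K$-set to avoid it; you also cannot avoid it proactively, because $X$ is determined only after $R_{v_i}$ was carved. The failure is not merely in the proof: a charging argument shows the invariant itself can break. Each consumed vertex lies in $N(R_v)$ for up to $t$ pending $v$ (reservations are disjoint), so the total ``crowding'' is up to $(K+1)v(T)\cdot t \approx (K+1)n/3$, while the total capacity of all reservations' neighbourhoods is about $v(T)\cdot Kt \approx Kn/3$; with constant $K$ there is simply no slack, so without a smarter global mechanism every reservation could in principle be swamped.

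This is precisely the difficulty the paper's proof is built to circumvent. Instead of local private reservations, it invokes the Feldman--Friedman--Pippenger non-blocking network theorem (Lemma \ref{lemma:nonblocking}) on the bipartite double cover of $G$: the Hall-type condition $|N_B(X)| \ge 2t'|X|$ for \emph{all} small $X$ yields a family of safe states that globally guarantees, at every step, an extension to a fresh, unused vertex --- this is a potential-function-style bookkeeping that replaces your disjointness invariant and is exactly what absorbs the evolving set $U$. The constraint sets $S_v$ are then handled by a separate, purely offline device: sets $A_i \subseteq S_{h_i}$ defined backwards from the leaves, so that any vertex of $A_i$ has at least $(1-\beta)t$ neighbours in $A_j$ for every child $j$, with $|A_i| \ge (1-2\eps)n$ proved by the same variance estimate you use for $B(S)$. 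Your one- or two-level lookahead (avoiding $B(S_u)$ for children and grandchildren) can be repaired by this recursive definition, since the $A_i$ depend only on $T$, $G$ and the $S_v$; but the interaction with previously used and reserved vertices needs the FFP machinery (or an equivalent Friedman--Pippenger/rolling-backwards argument as in \cite{draganic22rolling}), and your proposal does not supply a substitute for it.
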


The main machinery underlying the proof of Lemma \ref{lemma:tree} is a result from the theory of \emph{nonblocking networks}, due to Feldman, Friedman, and Pippenger \cite[Proposition 1]{feldman88nonblocking}. An efficient algorithmic version of this result was obtained by Aggarwal et al.~\cite{aggarwal96optical}.

\begin{definition} \label{def:nonblocking}
Given $t, s \in \mathbb{N}$, we say that a bipartite graph $B = (V_1 \cup V_2, E)$ is \emph{$(t, s)$-nonblocking} if there exists a family $\cS$ of  subsets of $E$, called the \emph{safe} states, such that the following holds:
\begin{enumerate}[(P1)]
    \item $\emptyset \in \cS$,
    \item if $E'' \subseteq E'$ and $E' \in \cS$ then $E'' \in \cS$, and
    \item \label{prop:c} given $E' \in \cS$ of size $|E'| < s$ and a vertex $v \in V_1$ with $\deg_{E'}(v) < t$ (that is, $v$ is incident to less than $t$ edges in $E'$), there exists an edge $e = (v, w) \in E \setminus E'$ such that $E' \cup \{e\} \in \cS$ and $w$ is not incident to any edge in $E'$.
\end{enumerate}
\end{definition}

\begin{lemma}[\cite{feldman88nonblocking}] \label{lemma:nonblocking}
Let $B = (V_1 \cup V_2, E)$ be a bipartite graph and $a, t \in \mathbb{N}$. If
$$
    |N_B(X)| \ge 2t|X|
$$
for every $X \subseteq V_1$ of size $1 \le |X| \le 2a$, then $B$ is $(t, ta)$-nonblocking.
\end{lemma}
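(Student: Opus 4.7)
My plan is to follow the classical Feldman--Friedman--Pippenger argument: build the safe family $\mathcal{S}$ explicitly via a local expansion invariant, and then verify properties (P1)--(P3) by a Hall/submodularity argument.

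\textit{Construction of $\mathcal{S}$.} For $E' \subseteq E$, let $U(E') \subseteq V_2$ denote the set of $V_2$-endpoints of edges in $E'$, and write $\deg_{E'}(X) = \sum_{v \in X} \deg_{E'}(v)$ for $X \subseteq V_1$. I declare $E' \in \mathcal{S}$ iff (a) each $w \in V_2$ satisfies $\deg_{E'}(w) \le 1$, and (b) the residual expansion invariant
\[
\bigl|N_B(X) \setminus U(E')\bigr| \;\ge\; 2t|X|-\deg_{E'}(X)
\]
holds for every nonempty $X \subseteq V_1$ with $|X| \le 2a$. Intuitively each $v\in X$ still has capacity $t-\deg_{E'}(v)$ to absorb further edges, and (b) demands that twice this total remaining capacity still fit as free neighbors in $V_2 \setminus U(E')$.

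\textit{Properties (P1) and (P2).} Property (P1) is immediate from the hypothesis since $U(\emptyset)=\emptyset$. For (P2), take $E'' \subseteq E'$: by (a) the freed $V_2$-vertices $U(E')\setminus U(E'')$ are in bijection with the deleted edges $E'\setminus E''$, and every deleted edge $(u,y)$ with $u\in X$ increases $|N_B(X)\setminus U|$ by exactly one while also increasing $2t|X|-\deg(X)$ by exactly one, so the invariant is preserved.

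\textit{Property (P3).} Given $E' \in \mathcal{S}$ with $|E'| < ta$ and $v \in V_1$ with $\deg_{E'}(v) < t$, the invariant applied to $\{v\}$ yields $|N_B(v) \setminus U(E')| > t$, so $v$ has many free neighbors; I must pick one that preserves safety. Call $X$ \emph{tight} if $v \notin X$, $1 \le |X| \le 2a$, and (b) is attained with equality. Adding an edge $(v,w)$ breaks safety only when $w \in N_B(X)$ for some tight $X$, since if $v \in X$ both sides of the invariant drop by one simultaneously. Submodularity of the slack $X \mapsto |N_B(X)\setminus U(E')|-2t|X|+\deg_{E'}(X)$ shows tight sets are closed under union (within the size range), giving a maximal tight set $Z$. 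Applying (b) to $Z\cup\{v\}$ and subtracting the tight equality for $Z$ then yields
\[
\bigl|(N_B(v)\setminus U(E'))\setminus N_B(Z)\bigr| \;\ge\; 2t-\deg_{E'}(v)\;>\;0,
\]
producing a safe $w$.

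\textit{Main obstacle.} The hardest step is managing the size constraint on $Z$: the invariant is only available for $|X| \le 2a$, so I need $|Z\cup\{v\}|\le 2a$, i.e.\ $|Z|\le 2a-1$. This is precisely why the hypothesis spans up to $|X|=2a$ rather than only $|X|\le a$, and why the factor $2t$ (instead of the minimal $t+1$) appears --- the extra slack must simultaneously absorb a candidate vertex $v$ and all the committed edges accumulated through the inductive construction. Verifying that the maximal tight set never reaches size $2a$ requires combining the strict bound $|E'|<ta$ with the tight-set submodularity, and this interaction between the expansion scale and the size of $E'$ is the technical core of the FFP argument.
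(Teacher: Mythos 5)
Your overall architecture (a safe family defined by a residual-expansion invariant, downward closure, and a tight/critical-set argument for the extension property) is the right shape; the paper itself gives no proof, citing Proposition~1 of Feldman--Friedman--Pippenger, whose argument has exactly this form. However, your specific invariant is too strong, and the step you flag as the ``main obstacle'' is not merely unverified --- it is false, and with it property (P3) fails for your family $\mathcal{S}$. With the requirement $|N_B(X)\setminus U(E')|\ge 2t|X|-\deg_{E'}(X)$, tightness gives no bound on $|X|$: combining it with the hypothesis and $|U(E')|=|E'|$ yields only $\deg_{E'}(X)\le |E'|$, which is vacuous. Concretely, let $B=K_{2a+1,\,4ta}$, with $|V_1|=2a+1$ and $|V_2|=4ta$. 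Every $X\subseteq V_1$ with $|X|\le 2a$ has $|N_B(X)|=4ta\ge 2t|X|$, so the hypothesis holds and $\emptyset\in\mathcal{S}$. Let $X_0$ be any $2a$-subset of $V_1$ and $v$ the remaining vertex. Then $X_0$ is tight already for $E'=\emptyset$ (so the maximal tight set has size $2a$, not $\le 2a-1$), and for every $w\in V_2=N_B(v)$ the state $\{(v,w)\}$ violates your condition (b) for $X_0$: the left side drops to $4ta-1$ while the right side stays $4ta$. Thus no admissible extension at $v$ exists in your family, even though this graph is trivially $(t,ta)$-nonblocking.

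The missing idea is the choice of constant in the invariant: the factor $2$ in the hypothesis must be spent on bounding the tight sets, not on the per-set requirement. Declare $E'$ safe when $\deg_{E'}(w)\le 1$ for all $w\in V_2$ and $|N_B(X)\setminus U(E')|\ge t|X|-\deg_{E'}(X)$ for every nonempty $X\subseteq V_1$ with $|X|\le 2a$. Your verifications of (P1) and (P2) carry over verbatim, and sets containing $v$ are still harmless (both sides drop by one when an edge at $v$ is added). Now tightness does interact with $|E'|<ta$: if $X$ is tight, then $t|X|-\deg_{E'}(X)=|N_B(X)\setminus U(E')|\ge |N_B(X)|-|E'|\ge 2t|X|-|E'|$, hence $t|X|\le |E'|-\deg_{E'}(X)<ta$ and $|X|\le a-1$. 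Consequently any two tight sets have union of size at most $2a-2$, so your submodularity argument legitimately closes the tight sets avoiding $v$ under union; the maximal such set $Z$ has $|Z|\le a-1$, the invariant applies to $Z\cup\{v\}$ (size at most $a$), and subtracting the tight equality for $Z$ gives $|(N_B(v)\setminus U(E'))\setminus N_B(Z)|\ge t-\deg_{E'}(v)\ge 1$, producing the required edge (if no tight set avoids $v$, the invariant at $\{v\}$ alone suffices). With your $2t$-version this counting is unavailable, so the proof cannot be completed as written.
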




We are now ready to prove Lemma \ref{lemma:tree}.

\begin{proof}[Proof of Lemma \ref{lemma:tree}]
    Let $h_1, \ldots, h_r$ be an ordering of the vertices of $T$ such that for each $2 \le i \le r = v(T)$, $h_i$ has exactly one neighbour within $\{h_1, \ldots, h_{i-1}\}$. For $i \in \{r, \ldots, 1\}$, iteratively, define the set $A_i \subseteq S_{h_i}$ as follows: If $h_i$ does not have a neighbour within $\{h_{i+1}, \ldots, h_r\}$, set $A_i = S_{h_i}$; otherwise, let $R_i = \{ j > i \colon h_i h_j \in T \}$ and set
    $$
        A_i = \{ v \in S_{h_i} \colon |N_G(v, A_j)| \ge (1 - \beta)t \text{ for every } j \in R_i\},
    $$
    where $\beta > 0$ is a sufficiently small constant we will specify shortly. We show that each $A_i$ is of size $|A_i| \ge (1 - 2\eps)n$. This clearly holds for $i = r$. Suppose that it holds for $A_{i+1}, \ldots, A_r$, for some $1 \le i \le r - 1$. We show that it then holds for $A_i$ as well. We can assume $R_i \neq \emptyset$, as otherwise we are immediately done. Suppose, towards a contradiction, that $|A_i| < (1 - 2 \eps)n$. As $|R_i| \le D$, there exists $j \in R_i$ such that the set
    $$
        X = \{ v \in S_{h_i} \colon |N_G(v, A_j)| < (1 - \beta)t\}
    $$
    is of size $|X| \ge \eps n / D$. By the definition of $X$ and \cite[Theorem 9.2.4]{alon16book}, we have
    $$
        |X| (\beta - 2\eps)^2 t^2  \le \sum_{v \in X} \left(|N_G(v, A_j)| - (1 - 2\eps)t \right)^2 \le 9 t \cdot 2\eps n.
    $$
    For $t > 18 D / (\beta - 2\eps)^2$ this gives $|X| < \eps n / D$, thus a contradiction.

    Before we move to the embedding of $T$, we need another bit of preparation. Let $B$ be the bipartite graph on the vertex set $V(G) \times \{1,2\}$ where $(v,i)$ and $(w,j)$ are connected by an edge iff $vw \in G$ and $i \neq j$. By, e.g.\ \cite[Lemma 2.4]{alon20distributed}, for sufficiently small $\beta > 0$ we have
    $$
        |N_B(X \times \{1\})| \ge 4 \beta t |X| 
    $$
    for each $X \subseteq V(G)$ of size $|X| \le 2n/t$. Therefore, by Lemma \ref{lemma:nonblocking}, $B$ is $(2\beta t, n/t)$-nonblocking.
    

    We find distinct vertices $s_1, \ldots, s_r \in V(G)$ such that mapping $h_i$ to $h_i$ gives a copy of $T$ in $G$.  Throughout the procedure we maintain a \emph{safe} subset $E \subseteq B$ (see Definition \ref{def:nonblocking}), which is initially empty. First choose arbitrary $s_1 \in A_1$. Then, for each $2 \le i \le r$, sequentially, do the following:
    \begin{enumerate}[(i)]
        \item Let $j < i$ be the unique index such that $h_j h_i \in T$.
        \item \label{step:2} Obtain $E \subset E' \in \mathcal{S}$ by repeatedly applying \ref{prop:c} with $v = (s_j,1)$, such that at the end we have $\deg_{E'}((s_j,1)) = 2 \beta t$. 
        \item \label{step:3} Choose an edge $((s_j,1), (w,2)) \in E' \setminus E$ such that $w \in A_i \setminus \{s_1\}$. Set $s_i := w$ and $E = E \cup \{(s_i, 2)\}$.
    \end{enumerate}
    Note that $E$ remains a safe subset throughout the procedure. It is also evident from the description of the procedure that $\deg_E(v) \le \Delta(T)$ for every $v \in V(G) \times \{1\}$, and $|E| < v(T)$. As $v(T) + 2\beta h < n/(2h)$, step \ref{step:2} is well-defined. It remains to show that a desired edge in \ref{step:3} always exists. Consider some step $i$. As $s_j \in A_j$ and $\deg_{E'}(s_j) = 2 \beta t$, by the definition of $A_j$ we have
    $$
        |N_{E'}(s_j, A_i)| \ge \beta t,
    $$
    thus $\deg_E(s_j) \le D < \beta t - 1$ (by choosing $t$ to be sufficiently large) implies  the desired edge indeed exists. Note that we need to explicitly exclude $s_1$ in \ref{step:3} as $(s_1,2)$ is not an endpoint of any edge in $E$.
\end{proof}

Note that in the previous proof, one could as well use \cite[Theorem 2.8]{draganic22rolling}. For our purposes, we find Lemma \ref{lemma:nonblocking} to provide cleaner framework.


\begin{proof}[Proof of Theorem \ref{thm:integer}]
Let $t \in \mathbb{N}$ and $\eps > 0$ be as given by Lemma \ref{lemma:tree}. Furthermore, let $m = C n^{1/d}$, for sufficiently large $C$, and let $G$ be an $(m, t, 3\sqrt{t})$-graph 
on the vertex set $[m]$ (see Theorem \ref{thm:ndl}). We first construct $\Gamma$ as follows: $V(\Gamma_n) = [m]^d$ and two vertices $\vv = (v_1, \ldots, v_k)$ and $\vw = (w_1, \ldots, w_k)$ are connected by an edge iff there exists $i \in [d]$ such that $v_i w_i \in G$. Note that $\Gamma$ has $O(n)$ vertices and $O(n^{2 - 1/d})$ edges. Finally, we construct $\Gamma^+$ by adding a new set $V^+$ of $2d n / m$ vertices and adding all the edges incident to at least one vertex in $V^+$. The number of edges of $\Gamma^+$ remains $O(n^{2 - 1/d})$. We show that $\Gamma^+$ is $\mathcal{H}_{d}^D(n)$-universal.

Consider some $H \in \mathcal{H}_{d}^D(n)$, and let $H = H_1 \cup \ldots \cup H_d$ be a decomposition of $H$ given by Lemma \ref{lemma:matroid} (with $b = 1$). We clean-up $H_i$'s as in the proof of Theorem \ref{thm:integer_unbounded}. First form $R' \subset V(H)$ as follows: For every $i \in [d]$ and every component of $H_i$ of size at least $m$, take one vertex from a cycle in that component (if such exist). This adds up to at most $d n / m$ vertices. Next, by applying Lemma \ref{lemma:tree_split} with $F = H_i \setminus R'$ (which is now a forest) and  $r = n/m$ for each $i \in [d]$, we obtain a set $R \subseteq V(H)$ of size $|R| \le dn/m$ such that each connected
component of $H_i \setminus (R \cup R')$ is of size at most $m$.
All the vertices of $R \cup R'$ will be mapped into $V^+$, thus we can set $H=H
\setminus (R \cup R')$ and $H_i = H_i \setminus (R \cup R')$. 


We iteratively find homomorphisms $\phi_i \colon H_i \hookrightarrow G$ such that, for each $\mathbf{v} = (v_1, \ldots, v_i) \in [m]^i$, we have
\begin{equation} \label{eq:S_v}
    |S_{\mathbf{v}}| \le n^{(d - i)/d},
\end{equation}
where
$$
S_{\mathbf{v}} = \left\{ h \in V(H) \colon \phi_1(h) = v_1, \ldots, \phi_i(h) = v_i \right\}.
$$
Once we have this, a homomorphism $\phi \colon H \hookrightarrow \Gamma$ given by $\phi(h) = (\phi_1(h), \ldots, \phi_k(h))$ is an injection, thus $H$ is indeed a subgraph of $\Gamma$.

Suppose we have found homomorphisms $\phi_1, \ldots, \phi_{i-1}$, for some $i \in [d]$, such that \eqref{eq:S_v} holds. To find a desired homomorphism $\phi_i \colon H_i \hookrightarrow G$, we do the following. Take trees in $H_i$ one at a time, in an arbitrary order, and extend $\phi_i$ to the current tree $T$ by taking an embedding of $T$ into $G$ with each $w \in T$ mapped to $S_w = V(G) \setminus R_w$, where
$$
    R_w = \{ v \in V(G) \colon \exists S_{\mathbf{v}} = (v_1, \ldots, v_{i-1}, v) \text{ such that } |S_{\mathbf{v}}| = n^{(d - i)/d} \text{ and } w \in S_{\mathbf{v}}^- \},
$$
where $S_{\mathbf{v}}$ is defined with respect to the current partial homomorphism $\phi_i$ and $S_{\mathbf{v}}^- = (v_1, \ldots, v_{i-1})$. By taking $C$ to be sufficiently large we have $|R_w| < \eps m$, thus we can apply Lemma \ref{lemma:tree} to find a desired embedding.
\end{proof}

Finally, we are in position to say something about the difference between 
the proofs of Theorem \ref{thm:rational} and Theorem \ref{thm:integer}. In Theorem \ref{thm:integer} we are able to `cut' forests in such a way that each tree is of size $o(v(G))$, where $G$ is an expander. This greatly helps us with planning how to embed the vertices such that the homomorphisms are as dispersed as possible: Embed one tree, revise forbidden subsets for images of some vertices, embed the next tree, revise, and so on. The fact that for each next tree we can freely choose where the root is embedded makes it possible to implement this strategy. In contrast, in the proof of Theorem \ref{thm:rational} we cannot `cut' forests in this way: We would need to remove $O(n^{1 - 1/a})$ vertices, resulting in $O(n^{2 - 1/a})$ edges in $\Gamma^+$ which is way too much. Instead we need to find a homomorphism of the whole $H_i$ at once, and consequently we cannot do the planning one tree at a time. We resort to randomness, and drift away from the optimal bound in order to beat certain union bound. It would be interesting to improve this and resolve Conjecture \ref{conj:bounded_deg_density}.

\section{Concluding remarks and open problems}

\begin{itemize}
\item It is possible to decrease the number of vertices in the constructions
in all three main theorems to $(1+\eps)n$,
increasing the number of edges by a factor of $c(\eps)$. This can be done
following the construction in Theorem~5 of~\cite{alon2001near} 
which is based
on an appropriate concentrator (unbalanced expander).
\item
The proofs of all theorems provide efficient (deterministic or
randomized) algorithms for
embedding a given input graph $H$ of the corresponding
family in the appropriate universal graph.
\item
The proof of Theorem \ref{thm:integer_unbounded} can be easily
extended to provide economical universal graphs for any family 
of graphs on $n$ vertices in which the edges of each graph
in the family can be partitioned into a given number $d$
of subgraphs from a family with strongly sublinear
separators. Indeed it need only be possible to
break each of these subgraphs into small connected components
by removing a relatively small number of vertices.
The number of edges will depend on the size of the separators.
\item
There are several natural classes of sparse graphs that are subsets
of the family of graphs with appropriate bounded 
density. 
Notable examples are graphs with a bounded
acyclic chromatic number, graphs with a bounded arboricity,
degenerate graphs and graphs with a bounded maximum degree.
Here are some brief details.

The acyclic chromatic number of a graph $H$ is the minimum integer
$k$ so that there is a proper vertex coloring of $H$ by $k$ colors
and the vertices of each cycle of $H$ receive at least $3$ distinct colors.
Equivalently this means that there is a proper vertex coloring  of
$H$ by $k$ colors
so that the induced subgraph on the union of any two color classes
is ayclic, that is, a forest. A graph $H$ is $k$-degenerate  
if every subgraph of it contains a vertex of degree at most $k$.
A graph has arboricity $k$ if its edge-set can be
partitioned into $k$ forests.

It is not difficult to check that if
the acylic chromatic number of a graph $H$ is $k$, then 
every nonempty subset $U$ of its vertices spans at most 
$(k-1)(|U|-1)$ edges. Therefore, by Edmonds' Matroid Decomposition Theorem 
\cite{edmonds65matroid} (which for the graphic matroid that is the 
one relevant here has been proved earlier by Nash-Williams \cite{NW}),
the arboricity of $H$ is at most $k-1$. If the aroboricity is
at most $k-1$ then the density $m(H)$ is also clearly at most
$k-1$. Another simple observation is that if $H$ is
$d$-degenerate, then its arboricity (and hence also its density)
is at most $d$. Finally it is obvious that if the maximum degree of
$H$ is $t$ then its density is at most $t/2$. 
It follows that the main theorems
in this paper also provide economical constructions of universal 
graphs for the families of $n$-vertex graphs in each of these
classes.
\item
The main open problem remaining is the assertion of Conjecture
\ref{conj:bounded_density}
for all rationals $d>1$. The results proved here
as well as the special cases established in
\cite{allen2023universality,alon08optimal,chung83spanning} 
indicate that it is likely to hold in full generality.
\end{itemize}

\section*{Acknowledgements}
This project began at the 2023 REU ``Combinatorics and Coding Theory
in the Tropics'' which was run by Anant Godbole and Fernando Pi\~{n}ero.
The authors are grateful to them for their impact, as well
as to other participants of the REU, especially
Mackenzie Bookamer, Sarah Capute, and Liza Ter-Saakov. We also
thank Kalina Petrova for pointing out a helpful reference.

\end{document}